\let\ams@starttoc\@starttoc
\let\@starttoc\ams@starttoc
\patchcmd{\@starttoc}{\makeatletter}{\makeatletter\parskip\z@}{}{}
\numberwithin{equation}{section}
  \newtheorem{theorem}{Theorem}[section]
  \newtheorem{lemma}[theorem]{Lemma}
   \newtheorem{corollary}[theorem]{Corollary}
\theoremstyle{definition}
\theoremstyle{remark}
\newtheorem{remark}[theorem]{Remark}
\newcommand{\lb}{\left (}
\newcommand{\rb}{\right )}
\newcommand{\lbb}{\left [}
\newcommand{\rbb}{\right ]}
\newcommand{\labs}{\left |}
\newcommand{\rabs}{\right |}
\newcommand{\lbrb}[1]{\lb #1 \rb}
\newcommand{\lbbrbb}[1]{\lbb#1\rbb}
\newcommand{\lbbrb}[1]{\lbb#1\rb}
\newcommand{\lbcurly}{\left\{}
\newcommand{\rbcurly}{\right\}}
\newcommand{\intervalOI}{\lbrb{0,\infty}}
\newcommand{\abs}[1]{\labs#1\rabs}
\newcommand{\curly}[1]{\lbcurly#1\rbcurly}
\newcommand{\so}[1]{\mathrm{o}\lbrb{#1}}
\newcommand{\Pbb}[1]{\Pb\lb #1\rb}
\newcommand{\Ebb}[1]{\Eb\lbb #1\rbb}
\newcommand{\LL}{L\'{e}vy }
\newcommand{\LLP}{L\'{e}vy process }
\newcommand{\LLPs}{L\'{e}vy processes }
\newcommand{\LLK}{\LL\!\!-Khintchine }
\newcommand{\limi}[1]{\lim\limits_{#1\to \infty}}
\newcommand{\limsupi}[1]{\varlimsup\limits_{#1\to \infty}}
\newcommand{\liminfi}[1]{\varliminf\limits_{#1\to \infty}}
\newcommand{\limo}[1]{\lim\limits_{#1\to 0}}
\newcommand{\Cb}{\mathbb{C}}
\newcommand{\Eb}{\mathbb{E}}
\newcommand{\Rb}{\mathbb{R}}
\newcommand{\Rp}{\mathbb{R}^+}
\newcommand{\Pb}{\mathbb{P}}
\newcommand{\Zb}{\mathbb{Z}}
\newcommand{\Mcc}{\mathcal{M}}
\newcommand{\ind}[1]{\mathbb{I}_{\{#1\}}}
\newcommand{\IntOI}{\int_{0}^{\infty}}
\newcommand{\IntII}{\int_{-\infty}^{\infty}}
\newcommand{\Wp}{W_\phi}
\newcommand{\Wkap}{W_\kappa}
\newcommand{\WkapP}{W_{\kappa_+}}
\newcommand{\WkapN}{W_{\kappa_-}}
\newcommand{\IPsi}{I_\Psi}
\newcommand{\IPsih}{I_{\hat{\Psi}}}
\newcommand{\MPsi}{\Mcc_{I_\Psi}}
\newcommand{\MPsih}{\Mcc_{\IPsih}}
\newcommand{\ab}{a+ib}
\renewcommand{\Im}{\mathtt{Im}}
\renewcommand{\Re}{\mathtt{Re}}
\newcommand{\filtration}[2]{\lbrb{#1_{#2}}_{#2\geq 0}}
\title{Moments of exponential functionals of L\'{e}vy processes on a deterministic horizon -- identities and explicit expressions}
\author{Z. Palmowski\footnote{Department of Applied Mathematics, Faculty of Pure and Applied Mathematics,
Wroclaw University of Science and Technology
ul. Janiszewskiego 14a, 50-372 Wroclaw, Poland, $^{\textnormal{a}}$\href{mailto:zbigniew.palmowski@pwr.edu.pl}{zbigniew.palmowski@pwr.edu.pl}}\hspace{0.12cm}$^{,\textnormal{a}}$, H. Sariev\footnote{Institute of Mathematics and Informatics, Bulgarian Academy of Sciences, Georgi Bonchev str. bl.8, Sofia 1113, Bulgaria, $^{\textnormal{b}}$\href{mailto:h.sariev@math.bas.bg}{h.sariev@math.bas.bg}}\hspace{0.12cm}$^{,\ddagger,\textnormal{b}}$ and M. Savov\footnote{Faculty of Mathematics and Informatics, Sofia University "St. Kliment Ohridski", 5 James Bourchier blvd., Sofia 1164, Bulgaria, $^{\textnormal{c}}$\href{mailto:msavov@fmi.uni-sofia.bg}{msavov@fmi.uni-sofia.bg}}\hspace{0.12cm}$^{,\dagger,\textnormal{c}}$}
\date{}
\begin{document}

\maketitle
\begin{abstract}
In this work, we consider moments of exponential functionals of L\'{e}vy processes on a deterministic horizon. We derive two convolutional identities regarding these moments. The first one relates the complex moments of the exponential functional of a general L\'{e}vy process up to a deterministic time to those of the dual L\'{e}vy process. The second convolutional identity links the complex moments of the exponential functional of a L\'{e}vy process, which is not a compound Poisson process, to those of the exponential functionals of its ascending/descending ladder heights on a random horizon determined by the respective local times. As a consequence, we derive a universal expression for the half-negative moment of the exponential functional of any symmetric L\'{e}vy process, which resembles in its universality the passage time of symmetric random walks. The $(n-1/2)^{th}$, $n\geq 0$ moments are also discussed. On the other hand, under extremely mild conditions, we obtain a series expansion for the complex moments (including those with negative real part) of the exponential functionals of subordinators. This significantly extends previous results and offers neat expressions for the negative real moments. In a special case, it turns out that the Riemann zeta function is the minus first moment of the exponential functional of the Gamma subordinator indexed in time.
\end{abstract}

\noindent{\bf Keywords:}
Bernstein function, exponential functional, Mellin transform

\section{Introduction, background and motivation}

Let $\xi=\filtration{\xi}{t}$ be a potentially killed \LLP that takes values on the real line and possesses \LLK exponent $\Psi$. Denote by
\[I_{\Psi}(t):=\int_0^t e^{-\xi_s}ds, \quad t\in(0,\infty],\]
the associated exponential functional on a deterministic horizon. When $t=\mathbf{e}_q$ is an exponentially distributed random variable with parameter $q>0$, independent of $\xi$, the functional-analytic properties of $I_\Psi(\mathbf{e}_q)$, including its moments, have been extensively studied; a small list of references containing general results and techniques is \cite{BerYor05,Maulik-Zwart-06,PardoPatieSavov2012,PatieSavov2012,PatieSavov2018,Urban95,Yor01}. The number of papers containing applications of $I_\Psi(\mathbf{e}_q)$ is enormous, some of which are discussed in \cite{MinSav23,PatieSavov2018}. The functional-analytical properties of $\IPsi(t)$, for $t\in(0,\infty)$, are much less understood. To the best of our knowledge, the only case where a relatively explicit expression for the density of $\IPsi(t)$ is available is the case where $\xi$ is Brownian motion with drift, see \cite{AlMatSh01,Yor92} and the survey by \cite{MatYor05} for more details. When $\xi$ is a subordinator, the moments of $\IPsi(t)$ are somewhat better understood, with some noteworthy references being \cite{BarkerSavov2021,SalVos18, SalVos19, vostrikova_2020}. In fact, \cite{SalVos18,SalVos19} deal with exponential functionals of additive processes and derive recurrent relations between their moments, providing some explicit formulae for the positive integer moments of $\IPsi(t)$ when $\xi$ is either a subordinator, Brownian motion with drift, or a compound Poisson process with standard normal jumps, see \cite[Corollary 1, Examples 5-6]{SalVos18}. Stemming from the formula in \cite{FitPit99} for the integer moments of Markov additive functionals, a multiple integral expression for $\Eb[\IPsi^n(t)]$, $n\geq 0$ is given in \cite{Br22,SalVos19}, with the motivation coming from applications to Asian options, see also \cite{HackKuz14,Pat13}. Curiously, as opposed to $I_\Psi(\mathbf{e}_q)$, which appears in various theoretical studies, the interest in $I_\Psi(t)$ seems to originate predominantly from applications.

The first aim of this paper is to establish two convolutional identities involving the moments of $\IPsi(t)$. The first one, see Lemma \ref{lem:convo}, relates the moments  of $\IPsi(t)$ to those of $\IPsih(t)$, where $\hat{\Psi}$ is the \LLK exponent of 
the dual \LL process $\hat{\xi}=(-\xi_t)_{t\geq 0}$. For symmetric \LLPs with infinite activity, this relation yields the universal expression $\Eb[\IPsi^{-1/2}(t)]=t^{-1/2}$, with the same one holding for diffuse \LLPs of finite activity. So far this identity was known only for Brownian motion, see \cite{Yor92}. In Section \ref{subsec:examples}, we discuss the $(n-1/2)^{th}$ moments of symmetric \LL processes. The second convolutional identity, see Lemma \ref{lem:convo2}, relates  the moments of $\IPsi(t)$ to the moments of the ladder height processes of $\xi$ evaluated at the respective local times. We have pointed to a lengthy and technical route by which the moments of the latter can be expanded into a series, which is a promising way to study the moments of $\IPsi(t)$ itself.

The second aim of this paper is to significantly improve \cite{BarkerSavov2021} and provide a series expansion for all moments, including negative ones, of $\IPsi(t)$ when $\xi$ is a subordinator. This is done under conditions that are much weaker than the already mild assumptions in \cite[Theorem 2.14]{BarkerSavov2021}. Besides general complex moments, our Theorem \ref{thm:NegMom} offers relatively neat expressions for the negative integer moments of $\IPsi(t)$. In a special case, $\Eb[\IPsi^{-1}(t)]=\zeta(t+1)$, where $\zeta$ is the celebrated Riemann zeta function, see Subsection \ref{subsec:examples}. Our result also gives the speed of convergence of the series, thus allowing for suitable truncation and approximation.

The paper is structured as follows: Section \ref{sec:main} introduces notation, presents the main results of our study, offers examples and discusses the applicability of our results;  Section \ref{sec:proofs} contains the proofs.

\section{Main results}\label{sec:main}

\subsection{Preliminary definitions and notation}

We use $\Rb$, $\Rp=\lbbrb{0,\infty}$, $\Cb$ to denote the real line, the half-line, and the complex plane, respectively. For any $z\in\Cb$, we write $z=\ab=\Re(z)+i\Im(z)$.

Recall that a one-dimensional \LL process, $\xi=\lbrb{\xi_t}_{t\geq 0}$, is a real-valued stochastic process defined in a suitable probability space that has stationary and independent increments. In this work, we allow for an independent exponential killing of $\xi$ in the usual fashion: if $\mathbf{e}_q$ is an exponentially distributed random variable with parameter $q\geq0$, independent of $\xi$, we set $\xi_t=\infty$ for $t\geq\mathbf{e}_q$. When we employ a L\'{e}vy process, we implicitly understand that it may be killed in the described way. This extension is useful in the context of exponential functionals on a deterministic horizon, since the killing is in fact their Laplace transform. We recall that the \LLK exponent $\Psi$ of any $\xi$ is given by
\begin{equation}\label{def:Psi}
\Psi(z):=\log\Ebb{e^{z\xi_1}}=az+\frac{\sigma^2}{2}z^2+\IntII\lbrb{e^{zy}-1-zy\ind{|y|\leq 1}}\Pi(dy)-q,\quad z\in i\Rb,
\end{equation}
where $a\in\Rb$, $\sigma^2\geq 0$, $q\geq 0$ are the linear term, the Brownian component, and the killing rate, respectively, $\Pi$ is a sigma-finite measure satisfying $\IntII \min\curly{x^2,1}\Pi(dy)<\infty$ and encoding the intensity and size of the jumps of $\xi$, and $\ind{\cdot}$ stands for the indicator function. Note that when $q=0$, then $\xi$ is conservative, that is, it is not killed. For any $\Psi$, it is well-known that the following Wiener-Hopf factorization holds
\begin{equation}\label{def:WH}
\Psi(z)=h(q)\kappa_+(q,-z)\kappa_-(q,z),\quad z\in i\Rb,
\end{equation}
where $\kappa_\pm$ are the bivariate Bernstein functions of the ascending/descending bivariate subordinators
related to the inverse ascending/descending ladder times and the ascending/descending ladder heights. More precisely, if $L_\pm=\lbrb{L_\pm(t)}_{t\geq 0}$ are the local times of the strong Markov processes $(\xi^+_t)_{t\geq 0}=(\sup_{s\leq t}\xi_s-\xi_t)_{t\geq 0}$ and $(\xi^-_t)_{t\geq 0}=(\xi_t-\inf_{s\leq t}\xi_s)_{t\geq 0}$, then $L^{-1}_\pm=(L^{-1}_\pm(t))_{t\geq 0}$ are the ascending/descending ladder times, $H_\pm=\lbrb{H_\pm(t)}_{t\geq 0}=(\xi_{L^{-1}_\pm(t)})_{t\geq 0}$ are the ascending/descending ladder heights, and
\[-\log\Ebb{e^{-qL^{-1}_\pm(1)-zH_\pm(t)}}=\kappa_\pm(q,z)=e^{\IntOI \int_{\lbbrb{0,\infty}} \lbrb{e^{-t}-e^{-qt-zx}}\frac{\Pbb{\xi_t\in \pm dx}}{t}dt},\quad \Re(z)\geq 0,\]
are the bivariate Bernstein functions or, in the language of L\'{e}vy processes, the bivariate Laplace exponents of $(L^{-1}_\pm,H_\pm)$, refer to \cite[Chapter VI]{Ber96} for more information. In addition,
\begin{equation}\label{eq:h}
	\begin{split}
		&h(q)=e^{-\int_{0}^{\infty}\lbrb{e^{-t}-e^{-qt}}\Pbb{\xi_t=0}\frac{dt}{t}},
	\end{split}
\end{equation}
with $h(q)\equiv1$ whenever $\xi$ is not a compound Poisson process, i.e. a \LLP whose intensity of jumps is finite $(\Pi\lbrb{\Rb}<\infty)$.
Furthermore, we can associate to each bivariate Bernstein function $\kappa$ a bivariate Bernstein-Gamma function $\Wkap$, which for any fixed $\zeta\in\Cb$ with $\Re(\zeta)\geq 0$ is the solution of
\begin{equation}\label{def:W}
\Wkap\lbrb{\zeta,z+1}=\kappa(\zeta,z)\Wkap(\zeta,z),\quad \Wkap(\zeta,1)=1,\quad \text{ on $\Re(z)>0$.}
\end{equation}
In fact, for any $q=\zeta\in\Rp$, $\Wkap(q,\cdot)$ is the Mellin transform of a positive random variable, that is, there exists $Y\geq0$ such that $\Eb[Y^z]=\Wkap(q,z+1)$, for $\Re(z)>-1$, see \cite[Theorem 2.8]{BarkerSavov2021}. The functional-analytic properties, including Stirling-type asymptotics, of the bivariate Bernstein-Gamma function $\Wkap$ are studied in detail in \cite{BarkerSavov2021}. Various similar results in the univariate case can be found in \cite{MinSav23,PatieSavov2013,PatieSavov2018,PatSav21}.

For any $\xi$ and $t\geq 0$, we consider the exponential functionals
\begin{equation*}
    \begin{split}
    &I_{\Psi}(t):=\int_0^t e^{-\xi_s}ds=\int_0^{\min\curly{t,\mathbf{e}_q}} e^{-\xi_s}ds,\\
    &I_{\Psi}\equiv I_{\Psi}(\infty):=\IntOI e^{-\xi_s}ds=\int_0^{\mathbf{e}_q}e^{-\xi_s}ds.
    \end{split}
\end{equation*}
By the Strong Law of Large Numbers, $\IPsi<\infty$ if and only if $q>0$ or $\lim_{s\rightarrow\infty}\xi_s=\infty$ almost surely. We introduce the Mellin transform of $I_{\Psi}$, defined formally for some $z\in \Cb$ by
\begin{equation*}
\begin{split}
\MPsi(z):=\Eb\bigl[I_\Psi^{z-1}\bigr].
\end{split}
\end{equation*}
When $h(q)\equiv1$, it follows from Theorem 2.4 in \cite{PatieSavov2018} that
\begin{equation}\label{eq:recur}
    \MPsi(z)=\kappa_-(q,0)\frac{\Gamma(z)}{\WkapP(q,z)}\WkapN(q,1-z),\quad  \Re(z)\in(0,1),
\end{equation}
since, in the notation of \cite[Lemma A1]{PatieSavov2018}, we have $\kappa_{\pm}=\phi_\pm$. Indeed, from \cite[Lemma A1]{PatieSavov2018}, for any fixed $q\geq 0$, it holds $\phi_+(z)=h(q)\kappa_+(q,z)$, and so from \cite[Theorem 4.1.(5)]{PatieSavov2018}, we obtain \eqref{eq:recur}. Relation \eqref{eq:recur} has been at the heart of the most refined results in the area of exponential functionals, which is mainly due to the fact that the expression holds at least on a complex strip of unit length and the asymptotics of $\Wkap$ are available in the complex half-plane. The most precise version of the latter is contained in \cite{MinSav23}.

\subsection{General convolutional identities}

In this part, we consider general convolutional identities regarding moments of exponential functionals on a deterministic horizon. The first one connects the moments of $\IPsi(t)$  and $\IPsih(t)$, where $\hat{\Psi}(z)=\Psi(-z)$ is the \LLK exponent of the dual \LLP $\hat{\xi}=(-\xi_t)_{t\geq 0}$. This allows us to find a universal formula for the $-1/2^{th}$-moment of the exponential functional of any symmetric \LL process.\\

\begin{lemma}\label{lem:convo}
Let $\Psi$ and $\hat{\Psi}$ be the \LLK exponents of the \LLPs $\xi$ and $\hat{\xi}$ that are not compound Poisson processes. Then, for any $t>0$ and $\Re(z)\in\lbrb{0,1}$, we have
 \begin{equation}\label{eq:convo}
     \int_{0}^t\Ebb{\IPsi^{-z}(t-s)}\Ebb{I^{z-1}_{\hat{\Psi}}(s)}ds=\Gamma(1-z)\Gamma(z)=\frac{\pi}{\sin{\pi z}}.
 \end{equation}
If $\xi$ is a compound Poisson process with continuous measure $\Pi(dx)$ and $\lambda=\Pi\lbrb{\Rb}$, then
\begin{equation}\label{eq:convo1}
     \int_{0}^t\Ebb{\IPsi^{-z}(t-s)}\Ebb{I^{z-1}_{\hat{\Psi}}(s)}ds=\Gamma(1-z)\Gamma(z)\frac{(\lambda +1)^2}{\lambda^2}\lbrb{1-e^{-\lambda t}-\lambda te^{-\lambda t}}\ind{t>0}.
\end{equation}
\end{lemma}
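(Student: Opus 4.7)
The plan is to reduce \eqref{eq:convo} to a Mellin-transform identity by taking Laplace transforms in $t$ on both sides, and then verify that identity using \eqref{eq:recur} and the Wiener--Hopf factorization \eqref{def:WH}. For each $q>0$, set $\Psi_q:=\Psi-q$, the \LLK exponent of $\xi$ killed at an independent $\eq$, so that $I_{\Psi_q}=\int_0^{\eq}e^{-\xi_s}ds$ and, by Fubini,
\begin{equation*}
q\IntOI e^{-qt}\Ebb{I_\Psi^{z-1}(t)}\D t \;=\; \Ebb{I_{\Psi_q}^{z-1}} \;=\; \MPsq{q}(z)
\end{equation*}
on the strip where both sides are finite. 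Applying this to both factors in the convolution on the left of \eqref{eq:convo}, and noting that the right-hand side, being constant in $t$, has Laplace transform $q^{-1}\pi/\sin\pi z$, the lemma becomes equivalent to the Mellin identity
\begin{equation*}
\MPsq{q}(1-z)\cdot\Mcc_{\hat\Psi_q}(z) \;=\; q\cdot\frac{\pi}{\sin\pi z},\qquad \Re(z)\in(0,1).
\end{equation*}

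Assume first that $\xi$ is not a compound Poisson process, so $h(q)\equiv 1$ and \eqref{eq:recur} applies to both $\Psi_q$ and $\hat\Psi_q$. Under duality, the ascending/descending ladder processes of $\hat\xi=-\xi$ are precisely those of $\xi$ with roles reversed, hence $\hat\kappa_\pm(q,z)=\kappa_\mp(q,z)$. Substituting \eqref{eq:recur} and its dual into the Mellin product, the four bivariate Bernstein--Gamma factors cancel pairwise, leaving $\kappa_+(q,0)\kappa_-(q,0)\cdot\Gamma(z)\Gamma(1-z)$. Reading \eqref{def:WH} at $z=0$ with the correct sign (all factors non-negative) yields $q=h(q)\kappa_+(q,0)\kappa_-(q,0)$, hence $\kappa_+(q,0)\kappa_-(q,0)=q$ in this regime; combined with the reflection formula $\Gamma(z)\Gamma(1-z)=\pi/\sin\pi z$, the Mellin identity follows, and Laplace inversion in $q$ closes \eqref{eq:convo}.

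For the compound Poisson case with continuous $\Pi$ of total mass $\lambda$, the same strategy works, but one must track $h(q)$. A Frullani computation in \eqref{eq:h} using $\Pbb{\xi_t=0}=e^{-\lambda t}$ produces $h(q)=(\lambda+1)/(q+\lambda)$. The Mellin formula must now be expressed through the functions $\phi_\pm$ of \cite[Lemma A1]{PatieSavov2018}, together with the scaling $W_{\phi_+}(q,z)=h(q)^{z-1}W_{\kappa_+}(q,z)$ that follows at once from \eqref{def:W}. The analogous cancellation now carries an extra factor $h(q)^2$, so the Mellin product becomes $q(\lambda+1)^2/(q+\lambda)^2\cdot\pi/\sin\pi z$; a partial-fraction expansion inverts this Laplace transform to $\lambda^{-2}(\lambda+1)^2\bigl(1-e^{-\lambda t}-\lambda t e^{-\lambda t}\bigr)\ind{t>0}\cdot\pi/\sin\pi z$, which matches \eqref{eq:convo1}.

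The main technical obstacle is the careful bookkeeping of $h(q)$ and of $\phi_\pm$ versus $\kappa_\pm$ in the compound Poisson case, together with the verification that the Laplace--Mellin bridge above is legitimate on the strip $\Re(z)\in(0,1)$; the latter should follow from finiteness of $\MPsq{q}(z)$ on this strip via Fubini and a standard Laplace-uniqueness argument.
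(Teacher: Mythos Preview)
Your proposal is correct and follows essentially the same route as the paper: take Laplace transforms in $t$, apply the Mellin identity \eqref{eq:recur} to $\Psi_q$ and $\hat\Psi_q$, use $\hat\kappa_\pm=\kappa_\mp$ so that the Bernstein--Gamma factors cancel pairwise, and close with the Wiener--Hopf relation $h(q)\kappa_+(q,0)\kappa_-(q,0)=q$ and the reflection formula; for the compound Poisson case both you and the paper compute $h(q)=(\lambda+1)/(q+\lambda)$ via Frullani and invert $h(q)^2/q$. The only cosmetic difference is that the paper writes the two Laplace transforms directly with the $h(q)^{z}$ and $h(q)^{1-z}$ factors already in place, whereas you recover the same extra $h(q)^2$ through the scaling $W_{\phi_+}(q,z)=h(q)^{z-1}W_{\kappa_+}(q,z)$.
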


\begin{remark}\label{rem:convo}
It would not be surprising if \eqref{eq:convo} could be proved from general considerations involving properties of \LL processes such as stationarity and independence of increments. However, expression \eqref{eq:recur} for the Mellin transform clearly suggests the identity and the proof is straightforward.
\end{remark}

We offer an immediate corollary that reveals an interesting invariance in the case of symmetric L\'{e}vy processes, which is reminiscent of the law of the entrance time for symmetric random walks, see \eqref{eq:convoSS}.\\

\begin{corollary}\label{cor:convo}
Let $\xi$ be a symmetric L\'{e}vy process, so that $\xi\stackrel{w}{=}\hat{\xi}$ and $\Psi\equiv\hat{\Psi}$. If $\xi$ is not a compound Poisson process, then we have, for any $t>0$, 
\begin{equation}\label{eq:convoS}
     \int_{0}^t\Ebb{\IPsi^{-z}(t-s)}\Ebb{I^{z-1}_{\Psi}(s)}ds=\Gamma(1-z)\Gamma(z)=\frac{\pi }{\sin{\pi z}},
\end{equation}
and
\begin{equation}\label{eq:convoSS}
\Eb\Bigl[I^{-1/2}_{\Psi}(t)\Bigr]=t^{-1/2}.
\end{equation}
If $\xi$ is a compound Poisson process with continuous measure $\Pi(dx)$ and $\lambda=\Pi\lbrb{\Rb}$, then, for any $t>0$,
\begin{equation}\label{eq:convoS1}
     \int_{0}^t\Ebb{\IPsi^{-z}(t-s)}\Ebb{I^{z-1}_{\hat{\Psi}}(s)}ds=\Gamma(1-z)\Gamma(z)\frac{(\lambda +1)^2}{\lambda^2}\lbrb{1-e^{-\lambda t}-\lambda te^{-\lambda t}}\ind{t>0},
\end{equation}
and
\begin{equation}\label{eq:convoSS1}
\Eb\Bigl[I^{-1/2}_{\Psi}(t)\Bigr]=\lbrb{\lambda+1}\sqrt{\pi}e^{-\lambda t}\int_0^t \frac{e^{\lambda s}}{\sqrt{s}}ds.
\end{equation}
\end{corollary}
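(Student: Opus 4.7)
The identities \eqref{eq:convoS} and \eqref{eq:convoS1} are immediate consequences of Lemma \ref{lem:convo}: the hypothesis of symmetry gives $\hat{\Psi}\equiv\Psi$, so the two factors in the convolution on the left-hand side of \eqref{eq:convo}, respectively \eqref{eq:convo1}, coincide. The substance of the corollary lies in extracting the closed-form expressions \eqref{eq:convoSS} and \eqref{eq:convoSS1} from these self-convolutional identities.

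Set $f(t):=\Eb[\IPsi^{-1/2}(t)]$. Specialising $z=1/2$ in the identities just established and using $\Gamma(1/2)^{2}=\pi$, each reduces to a scalar self-convolution equation of the form
\begin{equation*}
(f*f)(t)=g(t),
\end{equation*}
with $g(t)=\pi$ in the non-compound Poisson case and $g(t)=\pi\frac{(\lambda+1)^{2}}{\lambda^{2}}\bigl(1-e^{-\lambda t}-\lambda te^{-\lambda t}\bigr)$ in the compound Poisson case. The natural plan is to pass to Laplace transforms: writing $F(q):=\int_{0}^{\infty}e^{-qt}f(t)\,dt$, the convolution theorem yields $F(q)^{2}=\mathcal{L}(g)(q)$, and since $f\geq 0$ forces $F\geq 0$ on the positive real axis, taking the positive square root determines $F$ unambiguously. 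Uniqueness of Laplace inversion then recovers $f$.

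In the non-compound Poisson case, $F(q)^{2}=\pi/q$ gives $F(q)=\sqrt{\pi}\,q^{-1/2}=\mathcal{L}(t^{-1/2})(q)$, so $f(t)=t^{-1/2}$, which is \eqref{eq:convoSS}. In the compound Poisson case, the standard Laplace transforms $\mathcal{L}(1)=1/q$, $\mathcal{L}(e^{-\lambda t})=1/(q+\lambda)$, $\mathcal{L}(te^{-\lambda t})=1/(q+\lambda)^{2}$ combine through the elementary algebraic simplification
\begin{equation*}
\frac{1}{q}-\frac{1}{q+\lambda}-\frac{\lambda}{(q+\lambda)^{2}}=\frac{\lambda^{2}}{q(q+\lambda)^{2}},
\end{equation*}
which cancels the factor $\lambda^{-2}$ in front, to produce $F(q)^{2}=\pi(\lambda+1)^{2}/(q(q+\lambda)^{2})$, whence $F(q)=(\lambda+1)\sqrt{\pi}/(\sqrt{q}(q+\lambda))$. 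Writing this as a product and applying the convolution theorem with $\mathcal{L}^{-1}(1/\sqrt{q})(t)=1/\sqrt{\pi t}$ and $\mathcal{L}^{-1}(1/(q+\lambda))(t)=e^{-\lambda t}$ yields the integral representation \eqref{eq:convoSS1}.

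The principal technical point to secure is that $f$ is locally integrable on $(0,\infty)$ and of at most exponential growth at infinity, so that $F$ is well-defined on a right half-plane and Laplace inversion is unique. Monotonicity of $t\mapsto\IPsi(t)$ shows that $f$ is monotonically decreasing, hence uniformly bounded on $[t_{0},\infty)$ for every $t_{0}>0$; the elementary pathwise estimate $\IPsi(t)\geq te^{-\sup_{s\leq t}\xi_{s}}$ combined with standard short-time moment bounds for Lévy suprema yields $f(t)=O(t^{-1/2})$ as $t\downarrow 0$, giving integrability at the origin. These inputs, together with the nonnegativity that legitimises the choice of positive square root, are the only places where care beyond the algebraic manipulations is needed.
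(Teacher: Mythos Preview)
Your approach---specialise to $z=1/2$, pass to Laplace transforms, take the nonnegative square root, and invert---is exactly what the paper does. The only substantive difference is in how the existence of the Laplace transform $F(q)=\int_0^\infty e^{-qt}f(t)\,dt$ is secured, and here your argument has a real gap. The pathwise bound $\IPsi(t)\geq t\,e^{-\sup_{s\leq t}\xi_s}$ gives $f(t)\leq t^{-1/2}\,\Ebb{e^{\frac12\sup_{s\leq t}\xi_s}}$, but for symmetric L\'evy processes without exponential moments (any symmetric $\alpha$-stable with $\alpha<2$, say) the expectation on the right is infinite for every $t>0$, so your ``standard short-time moment bounds for L\'evy suprema'' do not exist in the generality claimed and the estimate yields nothing.

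The paper bypasses this entirely: it invokes \eqref{eq:LT} from the proof of Lemma~\ref{lem:convo}, which already identifies $F(q)=\MPsi(1/2)/q$ via the Mellin transform of the exponential functional killed at rate $q$, and \eqref{eq:recur} guarantees $\MPsi(1/2)<\infty$ for every $q>0$ with no moment hypothesis on $\xi$. Since Lemma~\ref{lem:convo} was itself proved by computing these Laplace transforms, you can simply cite that computation rather than try to re-establish integrability from scratch. With that one substitution your argument is complete and coincides with the paper's.
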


\begin{remark}
Identity \eqref{eq:convoSS} appears already in \cite{Yor92} in the case of Brownian motion.
\end{remark}

The second identity is again convolutional, but does not involve simultaneously $\IPsi(t)$ and $\IPsih(t)$. Before stating it, we relate the ratio in \eqref{eq:recur} to a suitable Laplace transform. This is of independent interest as the left-hand side of \eqref{eq:LTrep} appears in the representation of the Mellin transform of $\IPsi$, see \cite{PatieSavov2018}.\\

\begin{lemma}\label{lem:LTrep}
Let $\kappa$ be the Laplace exponent of a bivariate subordinator which is a Wiener-Hopf factor of the \LLK exponent of a \LLP $\xi$. Then, for any $q>0$, it holds that
\begin{equation}\label{eq:LTrep}
\frac{1}{q}\frac{\Gamma(z)}{W_{\kappa}(q,z)}=\IntOI e^{-qt} \Ebb{I^{z-1}_\kappa\lbrb{L(t)}}dt,\quad \Re(z)>0,
\end{equation}
where $I_{\kappa}\lbrb{t}=\int_{0}^t e^{-H(s)}ds$, $(L^{-1}(s), H(s))_{s\geq 0}$ is the bivariate ladder height process related to $\kappa$, and $L(t)$ is the local time related to the ladder time $L^{-1}(t)$.
\end{lemma}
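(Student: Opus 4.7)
My plan is to rewrite the Laplace transform on the right-hand side of \eqref{eq:LTrep} as an expectation over an independent exponential time, and then to recognise the resulting random variable as the exponential functional, on an infinite horizon, of a subordinator whose Laplace exponent is precisely $\kappa(q,\cdot)$. Concretely, using $\IntOI q e^{-qt}f(t)dt=\Ebb{f(\eq)}$, the claim \eqref{eq:LTrep} reduces to
\begin{equation*}
\Ebb{I_\kappa^{z-1}\lbrb{L(\eq)}}=\frac{\Gamma(z)}{\Wkap(q,z)},\quad \Re(z)>0,
\end{equation*}
where, as before, $\eq$ is an exponential random variable with parameter $q$ independent of $\xi$ (and hence of $(L^{-1},H)$).

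The key step is to identify the ``killed'' height process as a subordinator. Define $\tilde{H}(s):=H(s)$ for $s<L(\eq)$ and $\tilde{H}(s):=\infty$ for $s\geq L(\eq)$, where $L(\eq)$ is the first-passage time of the subordinator $L^{-1}$ strictly above $\eq$. Combining the independence of $\eq$ from $(L^{-1},H)$, the strong Markov property of the bivariate subordinator at its ladder epochs, and the memoryless property of $\eq$, one verifies that $\tilde{H}$ has stationary and independent increments. Its marginal Laplace functional is
\begin{equation*}
\Ebb{e^{-z\tilde{H}(s)}\ind{L(\eq)>s}}=\Ebb{e^{-zH(s)}\ind{L^{-1}(s)<\eq}}=\Ebb{e^{-zH(s)-qL^{-1}(s)}}=e^{-s\kappa(q,z)},\quad \Re(z)\geq 0,
\end{equation*}
so $\tilde{H}$ is a (possibly killed) subordinator with Laplace exponent $\kappa(q,\cdot)$ and killing rate $\kappa(q,0)$.

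Once this identification is in place, the proof concludes quickly. With the convention $e^{-\infty}=0$, we have $I_\kappa\lbrb{L(\eq)}=\IntOI e^{-\tilde{H}(s)}ds$, i.e.\ the exponential functional of a subordinator with Laplace exponent $\kappa(q,\cdot)$ on an infinite horizon. Since, by the defining relation \eqref{def:W}, $\Wkap(q,\cdot)$ is precisely the Bernstein--Gamma function attached to $\kappa(q,\cdot)$ (it satisfies $\Wkap(q,z+1)=\kappa(q,z)\Wkap(q,z)$ and $\Wkap(q,1)=1$), the Mellin transform of this exponential functional equals $\Gamma(z)/\Wkap(q,z)$ for $\Re(z)>0$ by \cite[Theorem 2.8]{BarkerSavov2021}, which is exactly the required identity. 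The only nontrivial point in the argument is showing that $\tilde{H}$ is genuinely a subordinator (and not merely a process with the correct one-dimensional marginals); this is where the memoryless property of $\eq$ is essential, and it is the small technical step I would write out most carefully.
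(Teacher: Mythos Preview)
Your proposal is correct and follows essentially the same route as the paper: both reduce \eqref{eq:LTrep} to $\Ebb{I_\kappa^{z-1}(L(\eq))}=\Gamma(z)/\Wkap(q,z)$, identify the height process stopped at $L(\eq)$ with a (killed) subordinator of Laplace exponent $\kappa(q,\cdot)$, and then invoke the known Mellin transform formula for exponential functionals of subordinators. The paper writes out the finite-dimensional distribution computation explicitly (comparing $(V_s)_{s\leq \mathbf{e}_{\kappa(q,0)}}$ with $(H_s)_{s\leq L(\eq)}$), while you phrase the same step via the memoryless property and strong Markov; the only minor point is that the precise reference for the Mellin identity is \cite[(4.1)]{BarkerSavov2021} or \cite[Theorem 2.4]{PatieSavov2018} rather than \cite[Theorem 2.8]{BarkerSavov2021}.
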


This result paves the way for the second convolutional identity.\\

\begin{lemma}\label{lem:convo2}
Let $\Psi$ be the \LLK exponent of a \LL process $\xi$ that is not a compound Poisson process. Then, for any $t>0$ and $\Re(z)\in\lbrb{0,1}$, it holds that
\begin{equation}\label{eq:convo2}
    \int_0^t \Ebb{\IPsi^{z-1}(t-s)}\Ebb{I^{-z}_{\kappa_-}(L_-(s))}ds dt=\Gamma\lbrb{1-z}\int_{0}^t U_{L^{-1}_+}(dt-s) \Ebb{I^{z-1}_{\kappa_+}(L_+(s))}ds,
\end{equation}
where $I_{\kappa_\pm}(t)=\int_0^t e^{-H_\pm(s)}ds$, $t\geq 0$ are the exponential functionals of the ladder height subordinators $H_\pm$, $L_\pm$ are  the local times of the ladder times $L^{-1}_\pm$, and $U_{L^{-1}_\pm}$ are the respective potential measures.
\end{lemma}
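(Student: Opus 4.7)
The strategy is to verify \eqref{eq:convo2} by showing that both sides have the same Laplace transform in $t$ for every $q>0$ and then invoking uniqueness. This is natural because both sides are convolutions (one in $ds$ against Lebesgue measure, one against the potential measure $U_{L^{-1}_+}$), so under Laplace transform they factorize into products; and all four resulting factors are precisely the quantities whose Laplace transforms have been identified in \eqref{eq:recur} and Lemma \ref{lem:LTrep}.

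First I would observe that $I_\Psi=I_\Psi(\eq)$ and condition on the exponential killing clock to obtain
\[
\frac{\MPsi(z)}{q}=\int_0^\infty e^{-qt}\Ebb{I_\Psi^{z-1}(t)}\,dt,
\]
identifying $\MPsi(z)/q$ as the Laplace transform of $t\mapsto\Ebb{I_\Psi^{z-1}(t)}$. Combined with Lemma \ref{lem:LTrep} applied with $\kappa=\kappa_-$ at the argument $1-z$, the Laplace transform of the left-hand side of \eqref{eq:convo2} equals
\[
\frac{\MPsi(z)}{q}\cdot\frac{1}{q}\frac{\Gamma(1-z)}{\WkapN(q,1-z)}=\frac{\kappa_-(q,0)\,\Gamma(z)\Gamma(1-z)}{q^2\,\WkapP(q,z)},
\]
where the second equality substitutes \eqref{eq:recur} and cancels $\WkapN(q,1-z)$.

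For the right-hand side, the potential measure satisfies $\int_0^\infty e^{-qt}U_{L^{-1}_+}(dt)=1/\kappa_+(q,0)$ since $\kappa_+(\cdot,0)$ is the Laplace exponent of the subordinator $L^{-1}_+$; together with one further application of Lemma \ref{lem:LTrep} (this time with $\kappa=\kappa_+$), the Laplace transform of the right-hand side works out to
\[
\Gamma(1-z)\cdot\frac{1}{\kappa_+(q,0)}\cdot\frac{1}{q}\frac{\Gamma(z)}{\WkapP(q,z)}=\frac{\Gamma(z)\Gamma(1-z)}{q\,\kappa_+(q,0)\,\WkapP(q,z)}.
\]
The two expressions coincide iff $\kappa_+(q,0)\,\kappa_-(q,0)=q$, which is the content of the Wiener-Hopf factorization \eqref{def:WH} at $z=0$ together with $h(q)\equiv1$, the latter being valid since $\xi$ is not a compound Poisson process (see \eqref{eq:h}). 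Uniqueness of Laplace transforms as functions of $q>0$ then yields \eqref{eq:convo2} for almost every $t>0$, and a standard right-continuity argument in $t$ upgrades this to all $t>0$.

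The main obstacle is organizational rather than conceptual: one must justify the Fubini-type interchanges that turn the two convolutions into products of Laplace transforms, which in each case reduces to local integrability of $t\mapsto\Ebb{I_\Psi^{z-1}(t)}$, $t\mapsto\Ebb{I^{-z}_{\kappa_-}(L_-(t))}$, and $t\mapsto\Ebb{I^{z-1}_{\kappa_+}(L_+(t))}$, inherited in each case from finiteness of the corresponding Mellin transform on the strip $\Re(z)\in(0,1)$.
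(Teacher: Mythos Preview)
Your proof is correct and follows essentially the same route as the paper: take Laplace transforms in $t$, use \eqref{eq:recur} for $\MPsi(z)/q$, invoke Lemma \ref{lem:LTrep} for both $\kappa_-$ (at $1-z$) and $\kappa_+$ (at $z$), recognize $1/\kappa_+(q,0)$ as the Laplace transform of $U_{L^{-1}_+}$, and close with $\kappa_+(q,0)\kappa_-(q,0)=q$ from the Wiener--Hopf factorization with $h(q)\equiv1$. Your version is in fact slightly more explicit than the paper's in flagging the Fubini justification and the a.e.-to-everywhere upgrade in $t$.
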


Next, we provide a corollary in a special case.\\ 

\begin{corollary}\label{cor:convo2}
Let $\Psi$ be the \LLK exponent of a symmetric \LL process $\xi$ that is not a compound Poisson process. Then $\kappa_\pm=\kappa$, and $L_\pm=L$ and $L^{-1}_\pm=L^{-1}$ are stable subordinators of index $1/2$, and we have, for any $\Re(z)\in\lbrb{0,1}$,
\begin{equation}\label{eq:convo2_1}
    \int_0^t \Ebb{\IPsi^{z-1}(t-s)}\Ebb{I^{-z}_{\kappa}(L(s))}ds =\int_{0}^t \frac{\Gamma\lbrb{1-z}}{\Gamma(\frac12)\sqrt{t-s}} \Ebb{I^{z-1}_{\kappa}(L(s))}ds.
\end{equation}
\end{corollary}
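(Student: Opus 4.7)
The plan is to derive \eqref{eq:convo2_1} as a direct specialization of Lemma \ref{lem:convo2}, reducing the work to three symmetry-induced simplifications of the ingredients in \eqref{eq:convo2}: (i) the Wiener-Hopf factors coincide, $\kappa_+ = \kappa_-$; (ii) the inverse local time $L^{-1}_+$ is a $1/2$-stable subordinator; and (iii) the corresponding potential measure has density $1/(\Gamma(1/2)\sqrt{r})$. Once these are in place, insertion into \eqref{eq:convo2} followed by the change of variable $r = t-s$ produces \eqref{eq:convo2_1}.

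For (i), symmetry $\xi \stackrel{w}{=} -\xi$ upgrades the pathwise identification of the reflected process of $-\xi$ with $\xi^-$ (and conversely) to the distributional equality $\xi^+ \stackrel{w}{=} \xi^-$ of reflected processes. Consequently, the bivariate ladder processes agree in law, $(L_+, H_+) \stackrel{w}{=} (L_-, H_-)$, so that $\kappa_+ = \kappa_- =: \kappa$ as bivariate Bernstein functions, and both expectations in \eqref{eq:convo2} become moments of the same functional $I_\kappa$ at the same subordinator $L^{-1}$ in law. For (ii), since $\xi$ is not a compound Poisson process we have $h(q) \equiv 1$, and evaluating the Wiener-Hopf identity \eqref{def:WH} at $z = 0$ together with $\kappa_+(q,0) = \kappa_-(q,0)$ from (i) gives $\kappa(q,0)^2 = q$, i.e.\ $\kappa(q,0) = \sqrt{q}$; this is precisely the Laplace exponent of a $1/2$-stable subordinator, so $L^{-1}_\pm$ is $1/2$-stable. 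For (iii), the potential measure of a subordinator with Laplace exponent $\sqrt{q}$ is read off from $\int_{0}^{\infty} e^{-qr} U_{L^{-1}_+}(dr) = 1/\kappa(q,0) = 1/\sqrt{q}$, and Laplace inversion yields the density $U_{L^{-1}_+}(dr) = dr/(\Gamma(1/2)\sqrt{r})$.

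Substituting (i)-(iii) into the right-hand side of \eqref{eq:convo2} and changing variable $r = t-s$ in the convolution produces exactly \eqref{eq:convo2_1}. The only step requiring mild care is (ii): the sign convention of the killed Wiener-Hopf factorization \eqref{def:WH} must be tracked so that $\kappa_+(q,0)\kappa_-(q,0)$ equals $q$ rather than $-q$, but this is classical bookkeeping for killed L\'{e}vy processes and presents no substantive obstacle.
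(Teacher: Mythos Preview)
Your proposal is correct and follows essentially the same route as the paper's own proof: both invoke Lemma~\ref{lem:convo2}, use $h(q)\equiv 1$ together with symmetry to obtain $\kappa_\pm(q,0)=\sqrt{q}$, identify the potential measure $U_{L^{-1}_+}(ds)=ds/(\Gamma(1/2)\sqrt{s})$, and substitute into \eqref{eq:convo2}. Your version simply spells out the intermediate justifications (the distributional equality of the reflected processes, the Laplace inversion for the potential density) in more detail than the paper's terse two-line argument.
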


\begin{remark}\label{rem:convo2}
At a convolutional level, the moments of $\IPsi(t)$ are related to the moments of $I_\kappa(\cdot)$ evaluated at the local time $L(v)$. Let $\xi$ be standard Brownian motion. Then $I_{\kappa}(L(v))=1-e^{-L(v)}$ and one may proceed to recover classical results for the moments of the exponential functional of Brownian motion, see \cite{Yor92}. In general, if we set $z=1/2$, then $\Eb[I^{-1/2}_\kappa(L(v))]$ appears in both convolutions in \eqref{eq:convo2_1} and we recover $\Eb[\IPsi^{-1/2}(t)]=t^{-1/2}$, see \eqref{eq:convoSS}. Unfortunately, this argument fails even for $z=1/2+ib$, since then we have an identity of the type $f_1*g=f_2*\overline{g}$.
\end{remark}

\begin{remark}\label{rem:convo3}
Formulae \eqref{eq:convo2} and \eqref{eq:convo2_1} can be useful for understanding $\Eb[\IPsi^{z}(t)]$, provided a series expansion in the spirit of \eqref{eq:NegMom} for $\Ebb{I^{z}_{\kappa}(L(v))}$ is derived. This would be a highly technical but plausible endeavor, as can be seen from the details in the proofs in \cite{BarkerSavov2021} and later when we deal with the easiest case, namely $(L^{-1}(t),H(t))=(t,H(t))$. We leave this for future work.
\end{remark}

\subsection{Moments of exponential functionals of subordinators}

The main result in this part extends and expands Theorem 2.14 in \cite{BarkerSavov2021} and yields an infinite series representation for $\Eb[I^{z}_{\phi}(t)]=\int_0^t e^{-\xi_s}ds$,
for a large class of subordinators $\xi$. Note that the Laplace exponents of non-decreasing \LLPs are in bijection with Bernstein functions via the relation
\begin{equation*}\label{definitionphisubordinator}
\phi(z)=-\log\Ebb{e^{-z\xi_1}}=-\Psi(-z),\quad \Re(z)\geq 0,
\end{equation*}
hence, the use of $I_\phi$ instead of $I_\Psi$. We recall that a function $\phi$ is a Bernstein function if and only if
\begin{equation}\label{def:Bern}
\phi(z)=\phi(0)+dz+\IntOI\lbrb{1-e^{-zy}}\mu(dy),\quad \Re(z)\geq 0,
\end{equation}
where $\phi(0)$, $d\in\Rp$, and $\mu$ is a sigma-finite measure on $\intervalOI$ satisfying $\IntOI \min\curly{y,1}\mu(dy)<\infty$. Following \cite[(2.5)-(2.7)]{PatieSavov2018}, for any Bernstein function $\phi$, we set
\begin{equation}\label{eq:aphi}
    \begin{split}
        \mathbf{a}_\phi&=\inf\curly{u<0:\phi\in \mathbf{A}_{\lbrb{u,\infty}}}\in[-\infty,0],\\
        \mathbf{u}_\phi&=\inf\curly{u\in\lbbrbb{\mathbf{a}_\phi,0}: \phi(u)=0}\in[-\infty,0],\\
        \mathbf{\bar{a}}_\phi&=\max\curly{ \mathbf{a}_\phi,\mathbf{u}_\phi}\in[-\infty,0],
    \end{split}
\end{equation}
where we use the convention $\sup\,\emptyset = -\infty$ and $\inf\,\emptyset= 0$, and $\mathbf{A}_{(u,\infty)}$, $u\in\Rb$ is the space of holomorphic functions on $\curly{z\in\Cb: \Re(z)>u}$.

The Bernstein-gamma function $W_\phi$, being the univariate analogue of $\Wkap$ (see \eqref{def:W}), is the unique solution in the space of Mellin transforms of positive random variables of the equation
\begin{equation}\label{Bernsteingammafunction}
W_\phi(z+1)=\phi(z)W_\phi(z), \quad W_\phi(1)=1, \quad \Re(z)\geq 0.
\end{equation}

For the next theorem, denote $\limsupi{x} =\limsup_{x\to\infty}$ and $\liminfi{x} =\liminf_{x\to\infty}$.\\

\begin{theorem}\label{thm:NegMom}
Let $\phi$ be any Bernstein function, i.e. the Laplace exponent of a potentially killed subordinator. Let $\limsupi{x}\phi'(x)/\phi'(2x)<\infty$, $\limi{x}x^2\phi'(x)=\infty$ and $\phi(\infty)=\infty$. Then, for all $t>0$ and $\Re(z)>-1+\mathbf{a}_\phi\ind{\phi(0)=0}$, we have
\begin{equation}\label{eq:NegMom}
\Ebb{I^{z}_{\phi}(t)}=\frac{\Gamma(z+1)}{W_{\phi}(z+1)}-\sum_{k\geq 1}\frac{\prod_{j=1}^k\lbrb{\phi(z+j)-\phi(k)}}{\prod_{j=1}^{k-1}\lbrb{\phi(j)-\phi(k)}}\frac{e^{-\phi(k)t}}{\phi(k)} \frac{\Gamma(z+1)}{W_{\phi_{(k)}}(z+1)},
\end{equation}
where $\phi_{(n)}(z):=\phi(n+z)-\phi(n)$, $n\geq1$ is a Bernstein function.
\end{theorem}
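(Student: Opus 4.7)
The plan is to compute the Laplace transform in $t$ of $\Eb[I^{z}_\phi(t)]$, identify its meromorphic structure in the Laplace variable, and recover the series \eqref{eq:NegMom} by pushing a Bromwich contour leftward and summing residues. Specialising Lemma~\ref{lem:LTrep} to the subordinator case (where $L(t)=t$ and $H=\xi$) and shifting $z\mapsto z+1$ gives, for $\Re(q)>0$,
\begin{equation*}
\int_0^\infty e^{-qt}\Eb[I^z_\phi(t)]\,dt \;=\; \frac{1}{q}\,\frac{\Gamma(z+1)}{W_{\phi_q}(z+1)},\qquad \phi_q(u):=q+\phi(u),
\end{equation*}
which reduces the theorem to a Laplace inversion problem.

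As a function of $q$, the right-hand side is meromorphic: iterating the functional equation \eqref{Bernsteingammafunction} forces the factor $q+\phi(k)$ to appear exactly once in the Weierstrass product for $W_{\phi_q}(z+1)$, so $q\mapsto W_{\phi_q}(z+1)$ has a simple zero at each $q=-\phi(k)$, $k\geq 1$. Combined with the prefactor $1/q$, the integrand has simple poles exactly at $q=0$ and $q=-\phi(k)$, $k\geq 1$, and the hypothesis $\phi(\infty)=\infty$ guarantees these are isolated with $-\phi(k)\to-\infty$. The residue at $q=0$ is $\Gamma(z+1)/W_\phi(z+1)$, giving the leading term of \eqref{eq:NegMom}.

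For the residue at $q=-\phi(k)$, I iterate the functional equation upward,
\[ W_{\phi_q}(z+k+1)\;=\;W_{\phi_q}(z+1)\prod_{j=1}^{k}(q+\phi(z+j)), \]
and use that at $q=-\phi(k)$ the map $u\mapsto \phi_q(k+u)$ coincides with $\phi_{(k)}(u)=\phi(k+u)-\phi(k)$. Comparison with the Bernstein--gamma function of $\phi_{(k)}$ --- checked first in the finite case $z=n\in\N$ via $W_{\phi_q}(n+1)=\prod_{j=1}^{n}(q+\phi(j))$ and partial fractions, then extended by analytic continuation in $z$ --- yields
\[ \lim_{q\to -\phi(k)} \frac{W_{\phi_q}(z+1)}{q+\phi(k)} \;=\; \frac{W_{\phi_{(k)}}(z+1)\,\prod_{j=1}^{k-1}(\phi(j)-\phi(k))}{\prod_{j=1}^{k}(\phi(z+j)-\phi(k))}. \]
Multiplying by $\Gamma(z+1)/(-\phi(k))$ reproduces, up to sign, the $k$-th coefficient of \eqref{eq:NegMom}.

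Finally I invert the Laplace transform by a Bromwich contour pushed leftward past each pole in turn and apply the residue theorem. The technical heart of the proof, and the main obstacle, is (i) bounding $W_{\phi_q}(z+1)$ on vertical lines $\Re q=c$ as $|\Im q|\to\infty$ so that arc contributions vanish, and (ii) establishing absolute convergence of the resulting series for $\Re(z)>-1+\mathbf{a}_\phi\ind{\phi(0)=0}$. Both rely on the sharp Bernstein--gamma asymptotics of \cite{BarkerSavov2021,MinSav23}: the doubling-type hypothesis $\limsupi{x}\phi'(x)/\phi'(2x)<\infty$ is precisely what unlocks these asymptotics on vertical lines, while $\limi{x}x^{2}\phi'(x)=\infty$ forces $\prod_{j=1}^{k-1}(\phi(j)-\phi(k))$ to grow (via Euler--Maclaurin-type estimates) fast enough to dominate $\prod_{j=1}^{k}(\phi(z+j)-\phi(k))$ uniformly on compact $z$-sets, which is also what legitimises the analytic continuation to $\Re(z)>-1+\mathbf{a}_\phi\ind{\phi(0)=0}$. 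Admitting this weaker combination of hypotheses is exactly the technical refinement over \cite[Theorem~2.14]{BarkerSavov2021}.
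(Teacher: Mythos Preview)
Your approach is genuinely different from the paper's. The paper does not invert a Laplace transform: it invokes a ready-made identity from \cite[Theorem~2.12]{BarkerSavov2021} expressing $\frac{d}{dt}\Ebb{I^z_\phi(t)}$ directly as the series $\sum_{k\geq 1}\phi(k)H_\phi(z,k)e^{-\phi(k)t}$ (with $H_\phi$ as in \eqref{eq:H}), valid whenever that series converges absolutely and $\phi(\infty)=\infty$, and then integrates from $t$ to $\infty$ using $\Ebb{I^z_\phi(\infty)}=\Gamma(z+1)/\Wp(z+1)$. The entire content of the proof --- Lemmas \ref{lem:kappa} through \ref{lem:H} --- is the absolute-convergence estimate under the weaker hypotheses: sharp pointwise bounds on the product ratios $\prod_{j}(\phi(z+j)-\phi(k))/\prod_{j}(\phi(j)-\phi(k))$ and on $\Gamma/W_{\phi_{(k)}}$. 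The improvement over \cite[Theorem~2.14]{BarkerSavov2021} lives entirely in these estimates (especially Lemma \ref{lem:estimate1}); there is no new structural idea. Extension to $\Re(z)>-1+\mathbf{a}_\phi\ind{\phi(0)=0}$ is by analytic continuation in $z$ once the bounds are locally uniform. Your residue identification is formally correct and gives a cleaner conceptual origin for the formula, but you still need the same convergence estimate (your step (ii)), for which you offer no argument beyond what the paper actually proves.

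The genuine gap is your step (i). The Stirling-type asymptotics in \cite{BarkerSavov2021,MinSav23} control $\Wp(z)$ as $|\Im z|\to\infty$ for a \emph{fixed} Bernstein function $\phi$; they say nothing about $q\mapsto W_{\phi_q}(z+1)$ as $|\Im q|\to\infty$ with $z$ fixed, which is what vanishing arc contributions in a Bromwich inversion require. Pushing the contour past $q=-\phi(k)$ also forces you into $\Re q<-\phi(0)$, where $\phi_q$ is no longer a Bernstein function and the bivariate theory of \cite{BarkerSavov2021} (developed only for $\Re\zeta\geq 0$) does not supply the meromorphic continuation in $q$ that you assume. These are not minor technicalities: establishing them would be the bulk of a proof along your lines, and the hypotheses of the theorem are tailored to the product estimates, not obviously to decay of $1/W_{\phi_q}$ on vertical $q$-lines. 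The paper's route sidesteps both issues by never leaving the real time domain.
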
~ 

\begin{remark}\label{rem:NegMom}
Let us compare our result to Theorem 2.14 in \cite{BarkerSavov2021}, which requires
\begin{equation}\label{suffcond}
\liminfi{x}\phi'(\lambda x)/\phi'(x)\geq C\lambda^{\beta},\qquad \beta>-1,
\end{equation}
uniformly on $\lambda\in[1,\Lambda]$, for $\Lambda>1$. Under \eqref{suffcond}, it is immediate that $\overline{\lim}_{x\rightarrow\infty}\phi'(x)/\phi'(2x)<\infty$. Moreover, $\phi'(x)\geq Cx^{\rho-1}$, for some $\rho>0$ and all $x$ large enough, see \cite[discussion around (6.16)]{BarkerSavov2021}. Thus, $\lim_{x\rightarrow\infty}x^2\phi'(x)=\infty$ and the conditions of Theorem \ref{thm:NegMom} are satisfied. To illustrate the scope of our result, take $\phi(x)=\ln\ln(x+e)$, which is a Bernstein function. This is an example from \cite[Remark 2.15]{BarkerSavov2021} and for which Theorem 2.14 in \cite{BarkerSavov2021} is not applicable. Then $\phi'(x)=1/\lbrb{\lbrb{x+e}\ln\lbrb{x+e}}$ and our conditions immediately hold. The same is true for more pathological cases such as $\phi(x)=\ln\ln\ln(x+e^e)$ and $\phi(x)=\ln\ln\ln\ln(x+e^{e{^e}})$, which are completely intractable by \cite[Theorem 2.14]{BarkerSavov2021}. The steps in the proof of Theorem \ref{thm:NegMom} coincide with those in \cite{BarkerSavov2021}, however, we had to significantly strengthen most of the bounds borrowed from there, e.g. via a new approximation of the key products in Lemma \ref{lem:H}.
\end{remark}~ 

\begin{remark}\label{rem:relation}
We present some conditions on the \LL measure $\mu$  and the drift $d$ in the definition \eqref{def:Bern} of $\phi$, which yield the conditions for $\phi$ in Theorem \ref{thm:NegMom}. First, note that $\phi(\infty)=\infty$ is equivalent to $\mu(\Rb^+)=\infty$ or $d>0$. If $d>0$, then $\phi'(\infty)=d>0$ and our conditions hold. Let $d=0$. Then, setting $\bar{\mu}(x):=\int_{x}^\infty \mu(dy)$, sufficient conditions in terms of $\mu$ for $\limi{x}x^2\phi'(x)=\infty$ come from
\[x^2\phi'(x)=x^2\IntOI ye^{-xy}\mu(dy)\geq x^2e^{-1}\int_{0}^{\frac1x}y\mu(dy)\geq \frac{e^{-1}}2 x\lbrb{\bar{\mu}\lbrb{\frac{1}{2x}}-\bar{\mu}\lbrb{\frac{1}{x}}},\]
with $\lim_{x\rightarrow\infty}x^2\int_{0}^{1/x}y\mu(dy)=\infty$ in all cases we tested. Let us consider $\overline{\lim}_{x\rightarrow\infty}\phi'(x)/\phi'(2x)<\infty$. From the fact that $\phi'$ is decreasing and $x\phi'(x)\leq \phi(x)$, see \cite[Proposition 3.1, (3.3)]{BarkerSavov2021}, we see that our condition is implied by $\underline{\lim}_{x\rightarrow\infty}\phi(2x)/\phi(x)>1$, which follows, for example, by $\underline{\lim}_{x\rightarrow 0+}\bar{\mu}(x/2)/\bar{\mu}(x)>1$. We highlight that the latter is too stringent and other sufficient conditions can be inferred from \cite{grzywny_leżaj_trojan_2023}.
\end{remark}~ 

\begin{remark}\label{rem:NegMom1}
Note that in this work we also extend the range over which \eqref{eq:NegMom} is valid. Our result holds for $\Re(z)>-1+\mathbf{a}_\phi\ind{\phi(0)=0}$ as opposed to Theorem 2.14 in \cite{BarkerSavov2021}, where only $\Re(z)>0$ is considered.
\end{remark}~ 

\begin{remark}\label{rem:NegMom2}
Exponential functionals of \LLPs are related to Ornstein-Uhlenbeck processes. For example, $\IPsi$ is the stationary law of a subclass of such processes, see \cite{BehLinMal_11,BehLinRek21,KuzParSav2012}, which is a consequence of the fact that $\IPsi(t)$ describes the evolution of this subclass of Ornstein-Uhlenbeck processes, see \cite{BehLinMal_11, Ber19}. Hence, positive moments of exponential functionals are related to those of Ornstein-Uhlenbeck processes, whereas negative ones are linked to a suitable Feller semigroup, see \cite{Patie08}.
\end{remark}~ 

\begin{remark}\label{rem:Zb}
In \cite{SalVos18, SalVos19, vostrikova_2020}, the identification of the moments is based on recurrent formulas that can be used for numerical purposes. Observe that \eqref{eq:NegMom} can also be used for this purpose as long as $W_{\phi}$ and $W_{\phi_{(k)}}$ are found. To that end, positive integer moments can be computed via the recursive formula \eqref{Bernsteingammafunction}, yielding $W_\phi(n)=\prod_{k=1}^n \phi(k)$, while any $W_\phi(z)$, for $\Re(z)> 0$, can be represented as an infinite Weierstrass product involving $\phi$ (see, e.g., \cite{PatieSavov2018}),
\[W_\phi(z)=\frac{e^{-\gamma_\phi z}}{\phi(z)}\prod_{k=1}^\infty \frac{\phi(k)}{\phi(k+z)}e^{\frac{\phi^\prime(k)}{\phi(k)}z},\]
where $\gamma_\phi=\lim_{n\to \infty}\bigl(\sum_{k=1}^n \frac{\phi^\prime(k)}{\phi(k)}-\log \phi(n)\bigr)$. Other semi-explicit expressions for $W_\phi$ can be found in \cite[Section 4]{HirschYor}, see also \cite[Theorem 4.7]{PatieSavov2018}. In addition, the q-gamma function and the double Gamma functions are instances of $\Wp$, see the introduction of \cite{PatieSavov2018}. Furthermore, for general $\Wp$ there are Stirling type approximations, see  \cite{BarkerSavov2021,MinSav23}, which can be used for numerical purposes too.
\end{remark}

When $\phi(0)=0$ and $\mathbf{a}_\phi<0$, we can evaluate at least some negative integer moments of $I_\phi(t)$.\\ 

\begin{corollary}\label{cor:NegMom}
Let $\phi$ be a Bernstein function such that $\phi(0)=0$ and $\mathbf{a}_\phi<0$. Suppose $\limsupi{x}\phi'(x)/\phi'(2x)<\infty$, $\limi{x}x^2\phi'(x)=\infty$ and $\phi(\infty)=\infty$. Then, for any $l\in\Zb$ such that $-1+\mathbf{a}_\phi<l<0$, we have, for $t>0$,
\begingroup\allowdisplaybreaks
\begin{align}
\Ebb{I^{l}_{\phi}(t)}&=\phi'(0)\frac{\prod_{j=1}^{-l-1}-\phi(j+l)}{(-l-1)!}+\sum_{k\geq 1}\frac{\prod_{j=1}^{-l-1}\lbrb{\phi(k)-\phi(j+l)}}{(-l-1)!}\phi'(k)e^{-\phi(k)t} \nonumber\\
&=\sum_{k\geq 0}\frac{\prod_{j=1}^{-l-1}\lbrb{\phi(k)-\phi(j+l)}}{(-l-1)!}\phi'(k)e^{-\phi(k)t},\label{eq:NegMom1}
\end{align}
\endgroup
where the existence of $\phi'(0)$ comes from the fact that $\mathbf{a}_\phi<0$.
\end{corollary}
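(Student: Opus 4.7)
The plan is to specialize Theorem \ref{thm:NegMom} to $z=l$ by taking the limit $z\to l$ through non-integer values. Since $\Re(l)>-1+\mathbf{a}_\phi$, the formula \eqref{eq:NegMom} is valid at every $z=l+\epsilon$ with $\epsilon>0$ small. The left-hand side $\Ebb{I_\phi^{l+\epsilon}(t)}$ converges to $\Ebb{I_\phi^l(t)}$ by dominated convergence, using the pointwise bound $I_\phi^{l+\epsilon}(t)\le I_\phi^{l-\delta}(t)+I_\phi^{l+\delta}(t)$ for $|\epsilon|\le\delta$ with finite expectations provided by Theorem \ref{thm:NegMom} applied at $l\pm\delta$. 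So the task reduces to computing the limit of the right-hand side, where both $\Gamma(z+1)/W_\phi(z+1)$ and each factor $\Gamma(z+1)/W_{\phi_{(k)}}(z+1)$ present $0\cdot\infty$ indeterminate forms that must cancel.

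To evaluate the first term, I would iterate \eqref{Bernsteingammafunction} backward to write
$$W_\phi(l+1+\epsilon)=W_\phi(1+\epsilon)\Big/\prod_{j=l+1}^{0}\phi(j+\epsilon).$$
Since $\mathbf{a}_\phi<0$, $\phi$ is differentiable at $0$ and $\phi(\epsilon)\sim\phi'(0)\epsilon$, which exactly compensates the simple pole of $\Gamma(l+1+\epsilon)$ at $\epsilon=0$, giving
$$\lim_{\epsilon\to 0}\frac{\Gamma(l+1+\epsilon)}{W_\phi(l+1+\epsilon)}=\frac{\phi'(0)}{(-l-1)!}\prod_{j=1}^{-l-1}\lbrb{-\phi(j+l)}.$$
The identical computation with $\phi_{(k)}$ in place of $\phi$, using $\phi_{(k)}(0)=0$ and $\phi_{(k)}'(0)=\phi'(k)$, yields
$$\lim_{\epsilon\to 0}\frac{\Gamma(l+1+\epsilon)}{W_{\phi_{(k)}}(l+1+\epsilon)}=\frac{\phi'(k)}{(-l-1)!}\prod_{j=1}^{-l-1}\lbrb{\phi(k)-\phi(k+j+l)}.$$

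The heart of the proof is the algebraic simplification of the $k$-th summand. Substituting the second limit into the summand and reindexing both products (via $m=l+j$ in $\prod_{j=1}^k(\phi(l+j)-\phi(k))$ and via $m=k+j+l$ in the new $\prod_{j=1}^{-l-1}(\phi(k)-\phi(k+j+l))$) gives two adjacent $m$-intervals $[l+1,l+k]$ and $[l+k+1,k-1]$ that chain into the single range $[l+1,k-1]$, producing a combined numerator $(-1)^{-l-1}\prod_{m=l+1}^{k-1}(\phi(m)-\phi(k))$. This interval always contains $m=0$ for $k\ge 1$, and the corresponding factor $\phi(0)-\phi(k)=-\phi(k)$ cancels the explicit $1/\phi(k)$; the remaining segment $\prod_{m=1}^{k-1}(\phi(m)-\phi(k))$ then cancels the denominator $\prod_{j=1}^{k-1}(\phi(j)-\phi(k))$ exactly, leaving $-\prod_{j=1}^{-l-1}(\phi(k)-\phi(j+l))$. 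Together with the overall minus sign in \eqref{eq:NegMom}, the $k$-th summand becomes precisely the $k$-th term of \eqref{eq:NegMom1}, and the second form follows by observing that the isolated first term agrees with the $k=0$ contribution under the convention $\phi(0)=0$, $e^{-\phi(0)t}=1$.

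The main obstacle is keeping the algebraic bookkeeping uniform in $k$: the $1/\phi(k)$ is cancelled at $j=-l$ in the prefactor when $k\ge-l$, but at $j=-l-k$ in the new product when $1\le k<-l$. The telescoping via the combined range $[l+1,k-1]$, which contains $m=0$ in both regimes, sidesteps a case-by-case analysis and yields the formula uniformly in $k$.
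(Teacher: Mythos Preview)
Your proof is correct and follows essentially the same route as the paper: apply Theorem \ref{thm:NegMom} at $z=l+\varepsilon$, use the recurrence \eqref{Bernsteingammafunction} together with $\phi(\varepsilon)/\varepsilon\to\phi'(0)$ (and $\phi_{(k)}(\varepsilon)/\varepsilon\to\phi'(k)$) to resolve the $\Gamma/W$ indeterminacies, and let $\varepsilon\to0$. The paper only sketches the case $l=-1$ and declares the rest ``similar''; your write-up supplies the missing algebraic telescoping for general $l$ via the combined range $m\in[l+1,k-1]$, which is a clean way to handle all $k$ uniformly. One small remark: since Theorem \ref{thm:NegMom} already asserts \eqref{eq:NegMom} on the whole strip $\Re(z)>-1+\mathbf{a}_\phi$ (with $\Gamma/W_{\phi}$ and $\Gamma/W_{\phi_{(k)}}$ understood as their analytic continuations, which are entire there by Lemma \ref{lem:boundW}), you could simply evaluate at $z=l$ rather than pass a limit; this sidesteps the need to justify interchanging the limit with the infinite sum.
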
~ 

\begin{remark}\label{rem:NegMom3}
The relation \eqref{eq:NegMom1} holds always for $l=-1$, in which case we get the neat expression
\begin{equation}\label{eq:NegMom1_1}
\Ebb{I^{-1}_{\phi}(t)}=\sum_{k\geq 0}\phi'(k)e^{-\phi(k)t}=\Ebb{\xi_1}+\Eb\Biggl[\frac{\xi_te^{-\xi_t}}{1-e^{-\xi_t}}\Biggr],
\end{equation}
where the last part stems from $\Eb[\xi_te^{-k\xi_t}]=\phi'(k)e^{-\phi(k)}$. If $\mathbf{a}_\phi<-1$, then using \eqref{eq:NegMom1} and \eqref{eq:NegMom1_1}, the second negative moment is given by the expression
\begin{equation*}\label{eq:NegMom1_2}
\Ebb{I^{-2}_{\phi}(t)}=\sum_{k\geq 0}\phi'(k)\lbrb{\phi(k)-\phi(-1)}e^{-\phi(k)t}=-\frac{d}{dt}\Ebb{I^{-1}_{\phi}(t)}-\phi(-1)\Eb\Bigl[I^{-1}_{\phi}(t)\Bigr].
\end{equation*}
The last equation also appears in \cite[Theorem 2 (42)]{SalVos18} and we have found its solution here. In fact, Theorem 2 in \cite{SalVos18} provides general recurrent relations for the negative moments of $I_\phi(t)$, whose solutions are given by \eqref{eq:NegMom1}.
\end{remark}

\subsection{Illustrative examples and discussion}\label{subsec:examples}
We start with some more consequences of Lemma \ref{lem:convo} regarding the moments of $I_\Psi(t)$ of order $n-1/2$, $n\geq 1$.  Let us consider an unkilled symmetric \LL process $\xi$ with \LLK exponent $\Psi$ that is not a compound Poisson process. Assume that $\Psi(-1/2-\epsilon)<\infty$ for some $\epsilon>0$, and note that, from Theorem 2.4 in \cite{PatieSavov2018}, \eqref{eq:recur} and Corollary \ref{cor:convo}, we get for $q>\Psi(-1/2)=\Psi(1/2)$ that
\[\IntOI e^{-qt}\Ebb{\IPsi^{\frac12}(t)}dt=\frac{1/2}{q-\Psi\lbrb{-\frac12}}\IntOI e^{-qt}\Ebb{\IPsi^{-\frac12}(t)}dt=\frac{1/2}{q-\Psi\lbrb{\frac12}}\frac{\Gamma\lbrb{\frac{1}{2}}}{\sqrt{q}}.\]
On the right-hand side we have a product of Laplace transforms, and since the identity holds for all $q>\Psi(-1/2)=\Psi(1/2)$, we get
\[\Ebb{\IPsi^{\frac12}(t)}=\frac{e^{\Psi\lbrb{\frac12}t}}{2}\int_{0}^{t}e^{-\Psi\lbrb{\frac12}s}\frac{ds}{\sqrt{s}}.\]
Assuming that $\Psi\lbrb{n-1/2+\varepsilon}=\Psi\lbrb{-n+1/2-\varepsilon}<\infty$, for some $\varepsilon>0$, and applying recursively Theorem 2.4 in \cite{PatieSavov2018}, we obtain for $q>\Psi\lbrb{n-1/2}$
\[\IntOI e^{-qt}\Ebb{\IPsi^{n-\frac12}(t)}dt=\lbrb{\prod_{k=1}^n\frac{k-\frac{1}{2}}{q-\Psi\lbrb{k-\frac12}}}\frac{\Gamma\lbrb{\frac{1}{2}}}{\sqrt{q}}.\]
As a result, $\Eb [\IPsi^{n-1/2}(t)]$ is an $(n+1)$-fold convolution of the measures $t^{-1/2}dt$ and $\nu_k(dt)=(k-1/2)e^{\Psi\lbrb{k-1/2}t}dt$, $1\leq k\leq n$, considered on $\Rb^+$.

We proceed with some examples stemming from Theorem \ref{thm:NegMom}. To do so, we introduce a couple of Bernstein functions that will be used in the sequel. Set
\begin{equation*}
    \begin{split}       \phi^{\mathbf{ln}}(x)=\ln\lbrb{x+1},\quad\phi_\alpha(x)=x^\alpha,\; \alpha\in(0,1),\quad\phi^{\mathbf{ln_2}}(x)=\ln\ln(x+e).
    \end{split}
\end{equation*}
Let us first discuss $W_{\phi_{(k)}},k\geq 1$, which appears in Theorem \ref{thm:NegMom}. Note that from \eqref{def:Bern},
\[\phi_{(k)}(z):=\phi(k+z)-\phi(k)=dz+\IntOI (1-e^{-zy})e^{-ky}\mu(dy)\]
are Bernstein functions with \LL measure $e^{-ky}\mu(dy)$ and the same drift. In general, there is no explicit way to relate $W_{\phi_{(k)}}$ to $\Wp$ apart from using the general Stirling approximation, mentioned in Remark \ref{rem:Zb}, and linking it to the approximation of $\Wp$. Nevertheless, consider $\phi_\alpha$ and set $\phi^*(x)=\phi_{(1),\alpha}(x)=(x+1)^\alpha-1$. Put $\phi_{(k),\alpha}(z)=k^\alpha\phi^*(z/k)=k^\alpha \phi^*_{(k)}(z)$, with $\phi^*_{(k)}$ being a Bernstein function. From \cite[Theorem 4.1 (5)]{PatieSavov2018}, we have that $W_{\phi_{(k),\alpha}}(z)=k^{\alpha(z-1)}W_{\phi^*_{(k)}}(z)$, so from \eqref{Bernsteingammafunction} we conclude that $W_{\phi^*_{(k)}}(z)=W_{\phi^*}(z/k)$. Therefore, the series \eqref{eq:NegMom} depends only on $W_{\phi_\alpha}$ and $W_{\phi^*}$. On the other hand, for $\phi^{\mathbf{ln}}$, we see that $\phi^{\mathbf{ln}}_{(k)}(x)=\ln\lbrb{x/(k+1)+1}=\phi^{\mathbf{ln}}(x/(k+1))$, in which case the series \eqref{eq:NegMom} depends only on $\Wp$.

We now give particular examples of moments computed using Theorem \ref{thm:NegMom}. Note that $\phi^{\mathbf{ln}}$ is the exponent of the Gamma subordinator with \LL measure $\mu(dy)=y^{-1}e^{-y}dy$, and by Remark \ref{rem:NegMom}, it satisfies the assumptions of Theorem \ref{thm:NegMom}. Since $\phi^{\mathbf{ln}}(0)=0$, Corollary \ref{cor:NegMom} also holds with $\mathbf{a}_{\phi^{\mathbf{ln}}}=-1$. It follows that
\begin{equation*}
\begin{split}
\Ebb{I^{-1}_{\phi^{\mathbf{ln}}}(t)}=\sum_{k\geq 1}\frac{1}{k^{t+1}}=\zeta(t+1),
\end{split}
\end{equation*}
which is the celebrated Riemann zeta function. Observe from \eqref{eq:NegMom1_1} that the Riemann zeta function can be represented as the expectation of a particular function of the Gamma subordinator at a fixed point,
\[\zeta(t+1)=\Ebb{\xi_1}+\Ebb{\frac{\xi_te^{-\xi_t}}{1-e^{-\xi_t}}},\]
from which, using that $\xi$ is a subordinator, we can easily check that $\zeta$ has a simple pole at $1$, since $\lim_{t\rightarrow0}\Eb\bigl[\frac{\xi_te^{-\xi_t}}{1-e^{-\xi_t}}\bigr]=1$. In fact, this also follows from \cite[Theorem 2.18, (2.40)]{PatieSavov2018}, which asserts that $\lim_{t\rightarrow0}\Eb[I^{-1}_{\phi^{\mathbf{ln}}}(t)]=1$. In that case, the $-2^{\text{nd}}$ moment is not well-defined.

Consider the subordinator with the truncated L\'evy measure $\mu_1(dy)=y^{-1}e^{-y}\ind{y<1}dy$ and  Laplace exponent $\phi_1(x)$. Then \[\phi_1'(x)=\lbrb{1-e^{-x-1}}/(x+1) \text{ and } \phi_1(x)=\int_0^x\frac{1-e^{-y-1}}{y+1}dy=\phi^{\mathbf{ln}}(x)-A(x),\]
where $A(x)=\int_0^x\frac{e^{-y-1}}{y+1}dy$; hence, $\Eb[I^{-1}_{\phi_1}(t)]=\sum_{k\geq 0}\frac{1-e^{-k-1}}{(k+1)^{t+1}}e^{tA(k)}$. Since $\mathbf{a}_{\phi_1}=-\infty$, we can evaluate all negative moments, the second being
\[\Ebb{I^{-2}_{\phi_1}(t)}=\sum_{k\geq 0}\frac{1-e^{-k-1}}{(k+1)^{t+1}}\lbrb{\ln{\lbrb{k+1}}-A(k)+\int_{0}^{1}\frac{1-e^{-y}}{y}dy}e^{tA(k)}.\]
Finally, for $\phi^{\mathbf{ln_2}}$, we can easily calculate that $\Eb[I^{-1}_{\phi^{\mathbf{ln_2}}}(t)]=\sum_{k\geq 0}[(k+e)\ln^{t+1}(k+e)]^{-1}$. One may construct many similar computable examples where the approximations are good enough.

The series expansion also offers the speed of convergence, as $t\rightarrow\infty$. If $z=\ab$, then we can employ \eqref{eq:estimate2} to estimate the tail of the series. More precisely, for any $l>1$, we get from Lemma \ref{lem:H}
\begin{equation*}\label{example:NegMom}
\begin{split}
    &e^{\phi(l+1)t}\abs{\Ebb{I^{z}_{\phi}(t)}-\frac{\Gamma(z+1)}{W_{\phi}(z+1)}+\sum_{k\leq l}\frac{\prod_{j=1}^k\lbrb{\phi(z+j)-\phi(k)}}{\prod_{j=1}^{k-1}\lbrb{\phi(j)-\phi(k)}}\frac{e^{-\phi(k)t}}{\phi(k)} \frac{\Gamma(z+1)}{W_{\phi_{(k)}}(z+1)}}\\
    &\leq K\sum_{k\geq l+1}\frac{\phi'(k)}{\lbrb{\phi(k)}^{1+a}}e^{c\lbrb{|a|+\frac{a^2}{2}}\phi(k)\lbrb{\frac{8\ln{k}}{k^2\phi'(k)}+\frac{\varphi'(y_k)}{\varphi^2(y_k)}}}e^{-(\phi(k)-\phi(l+1))t},
\end{split}
\end{equation*}
where the series on the right-hand side is convergent under the conditions of Theorem \ref{thm:NegMom}.

\section{Proofs}\label{sec:proofs}

We start with the proof Lemma \ref{lem:convo}.

\begin{proof}[Proof of Lemma \ref{lem:convo}]
We use \eqref{eq:recur} to get that, for $\Re(z)\in\lbrb{0,1}$,
\begin{equation}\label{eq:LT}
    \begin{split}
     &\IntOI e^{-qt}\Ebb{\IPsi^{-z}(t)}dt=\frac{\MPsi(1-z)}{q}=\frac{h^{z}(q)}{\kappa_+(q,0)}\frac{\Gamma(1-z)}{\WkapP(q,1-z)}\WkapN(q,z),\\
     &\IntOI e^{-qt}\Ebb{\IPsih^{1-z}(t)}dt=\frac{\MPsih(z)}{q}=\frac{h^{1-z}(q)}{\kappa_-(q,0)}\frac{\Gamma(z)}{\WkapN(q,z)}\WkapP(q,1-z),
    \end{split}
\end{equation}
where we have used that $\hat{h}(q)=h(q)$, see \eqref{eq:h}, and $\hat{\kappa}_\pm=\kappa_{\mp}$.

Multiplying the last two expressions, we easily arrive at
\begin{equation}\label{eq:prodLT}
   \begin{split}
       &\frac{\MPsi(1-z)}{q}\frac{\Mcc_{\hat{\Psi}}(z)}{q}=\frac{h^2(q)}{h(q)\kappa_+(q,0)\kappa_-(q,0)}\Gamma(1-z)\Gamma(z)=\frac{h^2(q)}{q}\Gamma(1-z)\Gamma(z).
   \end{split}
\end{equation}
If $\xi$ is not a compound Poisson process, then $h(q)\equiv 1$. In that case, the left-hand side of \eqref{eq:prodLT} is the Laplace transform of the convolution, which appears on the left-hand side of \eqref{eq:convo}, and the right-hand side of \eqref{eq:prodLT} is simply the Laplace transform of the Heavy-side function, from where we obtain \eqref{eq:convo}. Otherwise, if $\Pi\lbrb{\Rb}=\lambda<\infty$ and $\Pi$ is continuous, then from \eqref{eq:h} we get the following
\begin{equation}\label{eq:h1}
	\begin{split}
		&h(q)=e^{-\int_{0}^{\infty}\lbrb{e^{-t}-e^{-qt}}e^{-\lambda t}\frac{dt}{t}}=e^{\ln\lbrb{\frac{\lambda+1}{\lambda+q}}}=\frac{\lambda+1}{\lambda+q},
	\end{split}
\end{equation}
where we have used that $\Pbb{\xi_t=0}=e^{-\lambda t}$ and Frullani's integral. Therefore, $h^2(q)$ is the Laplace transform of the function $\lbrb{\lambda+1}^2e^{-\lambda t}t\ind{t>0}$ and $h^2(q)/q$ is the Laplace transform of
\[\frac{(\lambda +1)^2}{\lambda^2}\lbrb{1-e^{-\lambda t}-\lambda te^{-\lambda t}}\ind{t>0}.\]
We thus conclude the proof.
\end{proof}

\begin{proof}[Proof of Corollary \ref{cor:convo}]
Relation \eqref{eq:convoS} is immediate from \eqref{eq:convo} once we use that $\IPsi(t)\stackrel{d}{=}\IPsih(t)$ when $\xi$ is symmetric. To prove \eqref{eq:convoSS}, we use \eqref{eq:convoS} for $z=1/2$ and note that in this case \eqref{eq:convoS} is simply the  convolution of $\Eb[\IPsi^{-1/2}(s)]$ with itself. Then \eqref{eq:LT} is invoked to get that
\[\frac{\MPsi\lbrb{\frac12}}{q}=\IntOI e^{-qt}\Ebb{\IPsi^{-\frac12}(t)}dt=\frac{\sqrt{\pi}}{\sqrt{q}}=\IntOI e^{-qt}t^{-\frac12}dt.\]
This establishes \eqref{eq:convoSS}. Relation \eqref{eq:convoS1} is immediate from \eqref{eq:convo1}, whereas \eqref{eq:convoSS1} follows in a similar fashion by setting $z=1/2$ in \eqref{eq:convoS1} and identifying the measures with the help of \eqref{eq:h1} above.
\end{proof}

\begin{proof}[Proof of Lemma \ref{lem:LTrep}]
Fix $q>0$ and $z\in\mathbb{C}$ such that $\Re(z)>0$. It is well-known, see \cite[(4.1)]{BarkerSavov2021} or the general expression in \cite[Theorem 2.4]{PatieSavov2018}, that
\begin{equation*}
    \begin{split}
       \frac{\Gamma(z)}{W_{\kappa}(q,z)} &=\Eb\Biggl[\lbrb{\int_{0}^{\mathbf{e_{\kappa(q,0)}}}e^{-V_s}ds}^{z-1}\Biggr],
    \end{split}
\end{equation*}
where $V=\lbrb{V_s}_{s\geq 0}$ is the unkilled subordinator pertaining to the Bernstein function $\kappa_q(z)=\kappa(q,z)-\kappa(q,0)$, and $\mathbf{e_{\kappa(q,0)}}$ is an independent of $V$ exponential random variable with parameter $\kappa(q,0)>0$. Next, we show that $\lbrb{V_s}_{s\leq \mathbf{e_{\kappa(q,0)}}}\stackrel{w}{=}\lbrb{H_s}_{s\leq L(\mathbf{e}_q)}$, where $(L^{-1},H)$ is the bivariate ladder subordinator, $L$ is the local time, and $\mathbf{e}_q$ is an independent of $(L^{-1},H)$ exponential random variable with parameter $q$. 
Pick $s_0=0<s_1<s_2<\cdots<s_n$ and $\theta_1,\ldots,\theta_n\in\mathbb{R}$. It follows that
 \begin{equation*}\label{pr:V=H}
     \begin{split}
     &\Ebb{e^{i\sum_{j=1}^n\theta_j\lbrb{V_{s_j}-V_{s_{j-1}}}}\ind{\mathbf{e_{\kappa(q,0)}}>s_n}}=\kappa(q,0)\IntOI e^{-\kappa(q,0)t}\Ebb{e^{i\sum_{j=1}^n\theta_j\lbrb{V_{s_j}-V_{s_{j-1}}}}\ind{t>s_n}} dt\\
     &\quad=\Ebb{e^{i\sum_{j=1}^n\theta_j\lbrb{V_{s_j}-V_{s_{j-1}}}}} e^{-\kappa(q,0)s_n}= e^{-\kappa(q,0)s_n}\prod_{j=1}^n e^{(s_j-s_{j-1})\lbrb{-\kappa(q,-i\theta_j)+\kappa(q,0)}}\\&\quad=\prod_{j=1}^n e^{-(s_j-s_{j-1})\kappa(q,-i\theta_j)},
     \end{split}
 \end{equation*}
and
 \begin{equation*}\label{pr:V=H1}
     \begin{split}
     &\Ebb{e^{i\sum_{j=1}^n\theta_j\lbrb{H_{s_j}-H_{s_{j-1}}}}\ind{L(\mathbf{e}_q)>s_n}}=q\IntOI e^{-qt}\Ebb{e^{i\sum_{j=1}^n\theta_j\lbrb{H_{s_j}-H_{s_{j-1}}}}\ind{L(t)>s_n}} dt\\
     &=\Ebb{e^{i\sum_{j=1}^n\theta_j\lbrb{H_{s_j}-H_{s_{j-1}}}} e^{-qL^{-1}(s_n)}}=\prod_{j=1}^n e^{-(s_j-s_{j-1})\kappa(q,-i\theta_j)};
     \end{split}
 \end{equation*}
thus, $\lbrb{V_s}_{s\leq \mathbf{e_{\kappa(q,0)}}}\stackrel{w}{=}\lbrb{H_s}_{s\leq L(\mathbf{e}_q)}$. Plugging this above, we get
\begin{equation*}
    \begin{split}
       \frac{\Gamma(z)}{W_{\kappa}(q,z)} &=\Eb\Biggl[\lbrb{\int_{0}^{L(\mathbf{e}_q)}e^{-H_s}ds}^{z-1}\Biggr].
    \end{split}
\end{equation*}
This concludes the proof.
\end{proof}

Next, we continue with the proof of Lemma \ref{lem:convo2}.

\begin{proof}[Proof of Lemma \ref{lem:convo2}]
Since $\xi$ is not a compound Poisson process, and thus $\kappa_+(q,0)\kappa_-(q,0)=q$ and $\kappa_\pm=\phi_\pm$, we note from \eqref{eq:recur} that, for any $q>0$ and $\Re(z)\in\lbrb{0,1}$,
\begin{equation*}
    \IntOI e^{-qt}\Ebb{\IPsi^{z-1}(t)}dt=\frac{1}{\kappa_+(q,0)}\frac{\Gamma(z)}{\WkapP(q,z)}\WkapN(q,1-z).
\end{equation*}
Rearranging the above equation, we get
\begin{equation*}
    \lbrb{\frac{1}{q}\frac{\Gamma(1-z)}{\WkapN(q,1-z)}}\IntOI e^{-qt}\Ebb{\IPsi^{z-1}(t)}dt=\frac{\Gamma(1-z)}{\kappa_+(q,0)}\lbrb{\frac{1}{q}\frac{\Gamma(z)}{\WkapP(q,z)}}.
\end{equation*}
Finally, employing \eqref{eq:LTrep} and using that $\kappa^{-1}_+$ is the Laplace transform of the potential measure $U_{L^{-1}_+}$, see \cite[Chapter III]{Ber96}, we conclude the proof by identifying the Laplace transforms of the two convolutions.
\end{proof}

\begin{proof}[Proof of Corollary \ref{cor:convo2}]
The proof follows from Lemma \ref{lem:convo2} and \eqref{eq:convo2}. First, observe that since $\xi$ is not a compound Poisson Process, then $h(q)\equiv 1$. Second, by symmetry, $\kappa_\pm(q,0)=\sqrt{q}$, hence $U_{L^{1}_\pm}(ds)=ds/(\sqrt{s}\Gamma(\lbrb{1/2}))$. Plugging all this information in \eqref{eq:convo2}, we arrive at \eqref{eq:convo2_1}.
\end{proof}~ 

We proceed with the proof of Theorem \ref{thm:NegMom}, starting from some preliminary results. Recall the quantities in \eqref{eq:aphi},
 $       \mathbf{a}_\phi=\inf\{u<0:\phi\in \mathbf{A}_{\lbrb{u,\infty}}\}\in[-\infty,0],\;
        \mathbf{u}_\phi=\inf\{u\in[\mathbf{a}_\phi,0]: \phi(u)=0\}\in[-\infty,0],\;
        \mathbf{\bar{a}}_\phi=\max\{ \mathbf{a}_\phi,\mathbf{u}_\phi\}\in[-\infty,0].$\\

\begin{lemma}\label{lem:kappa}
Let $\phi$ be any Bernstein function. Then, for any $z=\ab$ such that $a>\mathbf{a}_\phi$ and reals $x>y \geq 0$ with $x-y-a\neq 0$, it holds that
\begin{equation}\label{eq:kappa}
\begin{split}
    \abs{\phi(x)-\phi\lbrb{z+y}}&\leq \frac{\abs{x-y-z}}{\abs{x-y-a}}\abs{\phi(x)-\phi(a+y)}\\
    &\leq \abs{x-y-z}\phi'(\min\curly{x,a+y}),
    \end{split}
\end{equation}
and, if $x-y-a=0$, then
\begin{equation}\label{eq:kappa_2}
\begin{split}
      \abs{\phi(x)-\phi(x+ib)}\leq \int_0^{|b|} \abs{\phi'(x+iv)}dv\leq |b|\phi'(x).
    \end{split}
\end{equation}
Also, for $x>y>0$,
\begin{equation}\label{eq:kappa_1}
    (x-y)\phi'(x)\leq \phi(x)-\phi\lbrb{y}\leq (x-y)\phi'(y).
\end{equation}
\end{lemma}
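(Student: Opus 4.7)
The three inequalities have a common backbone: the integral representation
\[
\phi'(u) = d + \IntOI y\,e^{-uy}\,\mu(dy),\qquad \Re(u)>\mathbf{a}_\phi,
\]
which is the derivative of \eqref{def:Bern} extended to the holomorphy half-plane $\{\Re u>\mathbf{a}_\phi\}$. From this representation, the elementary bound $|e^{-uy}|=e^{-\Re(u)y}$ gives the master estimate
\[
|\phi'(u)|\ \leq\ \phi'(\Re u),\qquad \Re(u)>\mathbf{a}_\phi,
\]
and, in particular, $\phi'$ is decreasing on the real interval $(\mathbf{a}_\phi,\infty)$. These two facts drive every step. The plan is to parametrize $\phi$ along straight-line complex contours, take absolute values inside the integral, and then estimate $|\phi'|$ by its value on the real axis.

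\textbf{Part (1), main inequality \eqref{eq:kappa}.} Put $w:=a+y$, so $z+y=w+ib$ and $w>\mathbf{a}_\phi$. I integrate $\phi'$ along the segment from $w+ib$ to $x$, parametrized by $u(t)=w+ib+t(x-w-ib)$, $t\in[0,1]$, whose real part is $w+t(x-w)$, which stays between $w$ and $x$. This yields
\[
\phi(x)-\phi(w+ib) = (x-w-ib)\int_0^1 \phi'\!\lbrb{w+ib+t(x-w-ib)}\,dt.
\]
Taking absolute values and applying the master estimate pointwise, then changing variables back, gives
\[
|\phi(x)-\phi(w+ib)|\ \leq\ |x-w-ib|\int_0^1 \phi'\!\lbrb{w+t(x-w)}\,dt\ =\ \frac{|x-w-ib|}{|x-w|}\,|\phi(x)-\phi(w)|,
\]
which is precisely the first inequality since $|x-w-ib|=|x-y-z|$ and $|x-w|=|x-y-a|$. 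For the second inequality I use monotonicity of $\phi'$ on $(\mathbf{a}_\phi,\infty)$ to bound $\int_0^1\phi'(w+t(x-w))\,dt\leq \phi'(\min\{x,w\})$, giving $|\phi(x)-\phi(w)|\leq|x-w|\,\phi'(\min\{x,w\})$, and multiplying by $|x-w-ib|/|x-w|$ yields the claim.

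\textbf{Part (2), inequality \eqref{eq:kappa_2}.} Here $x-y-a=0$ forces $w=x$, so I integrate along the vertical segment from $x$ to $x+ib$:
\[
\phi(x+ib)-\phi(x) = i\int_0^{b}\phi'(x+iv)\,dv,
\]
where the integral is interpreted with the sign of $b$. Taking absolute values gives the first bound, and the master estimate $|\phi'(x+iv)|\leq\phi'(x)$ gives the second.

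\textbf{Part (3), inequality \eqref{eq:kappa_1}.} This is the real-variable mean value estimate: since $\phi'$ is decreasing on $(0,\infty)$ (immediate from the integral representation), for $x>y>0$,
\[
(x-y)\phi'(x)\leq \int_y^x\phi'(u)\,du = \phi(x)-\phi(y)\leq (x-y)\phi'(y).
\]
The only mild subtlety throughout is making sure the line segments used in Parts (1) and (2) lie inside $\{\Re u>\mathbf{a}_\phi\}$, where $\phi$ and $\phi'$ are holomorphic; this is guaranteed by $a>\mathbf{a}_\phi$ and $y\geq 0$, so the argument is essentially a clean application of the integral representation and does not contain a genuine obstacle.
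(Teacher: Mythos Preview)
Your proof is correct and follows essentially the same route as the paper: integrate $\phi'$ along the straight segment from $z+y$ to $x$, invoke the pointwise bound $|\phi'(u)|\le\phi'(\Re u)$ to reduce to the real integral $\int_0^1\phi'(w+t(x-w))\,dt=|\phi(x)-\phi(w)|/|x-w|$, and then use monotonicity of $\phi'$ for the second inequality and the vertical-segment / mean-value arguments for \eqref{eq:kappa_2} and \eqref{eq:kappa_1}. The only cosmetic difference is the parametrization (you use $t\in[0,1]$, the paper uses $s\in[0,x-y]$), and you are slightly more explicit about the segment remaining in $\{\Re u>\mathbf{a}_\phi\}$.
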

\begin{proof}
Let $a\geq 0$. If $\Re(z)+y=x$, then \eqref{eq:kappa_2} follows from
\begin{equation*}
    \begin{split}
        \abs{\phi(x)-\phi(x+ib)}\leq \int_0^{|b|} \abs{\phi'(x+iv)}dv\leq |b|\phi'(x),
    \end{split}
\end{equation*}
where we have used that $\abs{\phi'(z)}\leq \phi'(\Re(z))$, see \cite[Proposition 5.2, (5.9)]{BarkerSavov2021}. If $x-y-a\neq 0$, then
\begin{equation*}
    \begin{split}
        &\abs{\phi(x)-\phi\lbrb{z+y}}=\abs{\int_{z+y\mapsto x}\phi'(w)dw}=\abs{1-\frac{z}{x-y}}\abs{\int_{0}^{x-y}\phi'\lbrb{z+y+\lbrb{1-\frac{z}{x-y}}s}ds}\\
        &\leq \abs{1-\frac{z}{x-y}}\abs{\int_{0}^{x-y}\phi'\lbrb{a+y+\lbrb{1-\frac{a}{x-y}}s}ds}=\frac{\abs{x-y-z}}{\abs{x-y-a}}\abs{\phi(x)-\phi(a+y)},
    \end{split}
\end{equation*}
where we have used in the inequality that $\abs{\phi'(z)}\leq \phi'(\Re(z))$. Thus, the first part of \eqref{eq:kappa} follows, and the second one uses the fact that $\phi'$ is non-decreasing, see \cite[Proposition 3.1 (2)]{BarkerSavov2021}. Relation \eqref{eq:kappa_1} is immediate.
\end{proof}

The next lemma also comes from estimates originally loosely made in \cite{BarkerSavov2021}.\\

\begin{lemma}\label{lem:boundW}
Let $\Wp$ be a Bernstein-Gamma function. Then $\Wp^{-1}\in\mathbf{A}_{(\mathbf{a}_\phi,\infty)}$ and, for any $\Re(z)>\max\{-2+\mathbf{\bar{a}}_\phi,-1+\mathbf{a}_\phi\}$,
\begin{equation}\label{eq:boundW_1}
    \abs{\frac{1}{\Wp(z+1)}}\leq C\frac{\abs{\phi\lbrb{z+1}\sqrt{|\phi\lbrb{z+2}|}}}{\sqrt{\phi(1)}}e^{-\int_{1}^{2+\Re(z)}\ln\phi(x)dx},
\end{equation}
where $C>0$ is universal constant in $\phi$ and $z$ .
\end{lemma}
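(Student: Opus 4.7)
The plan is to split the statement into two parts: the holomorphicity of $W_\phi^{-1}$ on $\{\Re(z)>\mathbf{a}_\phi\}$, and the quantitative upper bound on $|W_\phi(z+1)|^{-1}$, treated separately. For the holomorphicity, I would invoke the Weierstrass product for $W_\phi$ recalled in Remark \ref{rem:Zb}, which meromorphically extends $W_\phi$ to $\{\Re(z)>\mathbf{a}_\phi\}$. Combined with the functional equation \eqref{Bernsteingammafunction}, the zeros of $W_\phi$ in this half-plane occur precisely at the shifts $k+\mathbf{u}_\phi$ for $k\geq 1$ (and at the non-positive integers if $\phi(0)=0$); the constraint $\Re(z)>\max\{-2+\mathbf{\bar{a}}_\phi,-1+\mathbf{a}_\phi\}$ is exactly what is needed so that $z+1$ avoids all these zeros of $W_\phi$.

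For the quantitative bound, the core idea is a complex Stirling-type approximation obtained from the Weierstrass product. Taking the logarithm of
\[W_\phi(z+1)=\frac{e^{-\gamma_\phi(z+1)}}{\phi(z+1)}\prod_{k=1}^{\infty}\frac{\phi(k)}{\phi(k+z+1)}\,e^{\frac{\phi'(k)}{\phi(k)}(z+1)},\]
and manipulating the resulting series by summation-by-parts combined with Euler--Maclaurin, the sum transforms into the integral $\int_1^{2+\Re(z)}\log\phi(u)\,du$ together with explicit boundary terms. The factor $|\phi(z+1)|$ on the right-hand side of the claimed bound originates from the isolated first term $1/\phi(z+1)$ in the Weierstrass product, while the prefactor $\sqrt{|\phi(z+2)|/\phi(1)}$ arises from the Euler--Maclaurin endpoint corrections $\tfrac{1}{2}(\log\phi(1)+\log|\phi(z+2)|)$ at the two ends of the summation range in $k$, applied to the complex-valued sequence $\log|\phi(k+z+1)|$.

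The main obstacle will be controlling the Euler--Maclaurin remainder, and the series of higher-order corrections, uniformly across all Bernstein functions $\phi$. This is done by exploiting that $\phi'$ is completely monotone --- so $\phi'/\phi$ is decreasing, and telescoping sums involving $\phi'/\phi$ collapse --- together with the pointwise estimate $|\phi'(w)|\leq\phi'(\Re(w))$ from \cite[Proposition 5.2]{BarkerSavov2021}, which lets one pass freely between real and complex values of $\phi'$. A subtle technical point is the preservation of the complex quantity $|\phi(z+2)|$ under the square root (rather than the coarser $\phi(\Re(z)+2)$): this requires carrying out the Euler--Maclaurin expansion directly on the complex-valued sum $\sum\log\phi(k+z+1)$ without prematurely replacing terms by their real counterparts --- the main refinement required over the analogous estimates made in \cite[Section 6]{BarkerSavov2021}.
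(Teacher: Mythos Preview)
Your plan is broadly sensible, but there is one concrete error and one misattribution worth flagging, and it is worth noting how different your approach is from the paper's.

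\textbf{The error about zeros.} You claim that ``the zeros of $W_\phi$ in this half-plane occur precisely at the shifts $k+\mathbf{u}_\phi$ for $k\geq 1$ (and at the non-positive integers if $\phi(0)=0$)''. This is backwards: on $\{\Re(z)>\mathbf{a}_\phi\}$ the function $W_\phi$ is \emph{zero-free} and meromorphic; the points you list are \emph{poles} of $W_\phi$ (coming from the denominators $\phi(z)$ and $\phi(k+z)$ in the Weierstrass product), hence zeros of $W_\phi^{-1}$, which is perfectly compatible with $W_\phi^{-1}$ being holomorphic there. The paper simply cites \cite[Theorem 4.1,(1)]{PatieSavov2018} for exactly this zero-free, meromorphic statement. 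Relatedly, your explanation of the constraint $\Re(z)>-2+\mathbf{\bar{a}}_\phi$ is off: it is not there to avoid zeros of $W_\phi$, but to ensure that $\ln\phi(x)$ is well-defined on the interval of integration $[1,2+\Re(z)]$ (or $[2+\Re(z),1]$), since $\mathbf{\bar{a}}_\phi$ marks where $\phi$ first vanishes or loses analyticity on the real axis.

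\textbf{Comparison of approaches for the bound.} The paper's proof of \eqref{eq:boundW_1} is essentially a citation: it says to repeat the argument of \cite[Lemma 4.4]{BarkerSavov2021}, which in turn rests on the Stirling-type expansion \cite[Theorem 2.9,(2.15)]{BarkerSavov2021}, and to be careful not to discard the factor $\sqrt{\phi(1)}$ that those authors dropped. No new computation is carried out in the present paper. Your proposal, by contrast, is to re-derive the Stirling asymptotic from scratch via Euler--Maclaurin on the logarithm of the Weierstrass product. That is a legitimate and more self-contained route---and it is essentially what underlies the cited results---but it is substantially more work than what the paper does, and your sketch leaves unverified the step where the infinite sum over $k\geq1$ collapses to the finite-range integral $\int_1^{2+\Re(z)}\ln\phi(u)\,du$ with a remainder that is bounded uniformly in $\phi$ and $z$. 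If you pursue this, that uniform control of the remainder is the place where the argument could fail; the paper sidesteps the issue entirely by invoking the already-proved expansion.
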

\begin{proof}
The statement $\Wp^{-1}\in\mathbf{A}_{(\mathbf{a}_\phi,\infty)}$ comes from \cite[Theorem 4.1, (1)]{PatieSavov2018}, which shows that $\Wp$ is zero-free and meromorphic in $\Re(z)>\mathbf{a}_\phi$. The proof of \eqref{eq:boundW_1} then follows the lines of \cite[Lemma 4.4, p. 491]{BarkerSavov2021}, without using any estimates for the factors containing $\phi$, and noting that $\ln\phi(x)$ is well-defined on $x>\mathbf{\bar{a}}_\phi$, since $\mathbf{\bar{a}}_\phi$ controls the first zero or the loss of analiticity of $\phi$. The authors have generously applied their own asymptotic expansion of $\Wp$ (see \cite[Theorem 2.9, (2.15)]{BarkerSavov2021}) by omitting the square root in the denominator, see $\sqrt{\phi(1)}$ above. This square root is absolutely crucial for our estimates.
\end{proof}

The next estimate is trivial, but general enough to state it separately.\\

\begin{lemma}\label{lem:product}
Let $\phi$ be a non-constant Bernstein function and $z=\ab$. If $a>-1-\mathbf{a}_\phi$ and $a\notin\curly{1,2,\cdots,k-1}$, there exists $K:=K(z)>0$ such that
\begin{equation}\label{eq:product}
    \abs{\frac{\prod_{j=1}^{k-1}\lbrb{\phi(z+j)-\phi(k)}}{\prod_{j=1}^{k-1}\lbrb{\phi(j)-\phi(k)}}}\leq K\abs{\frac{\prod_{j=1}^{k-1}\lbrb{\phi(k)-\phi(a+j)}}{\prod_{j=1}^{k-1}\lbrb{\phi(k)-\phi(j)}}}.
\end{equation}
Moreover, $K(z)$ is locally bounded on $\{z:\Re(z)>-1+\mathbf{a}_\phi\}\setminus\{1,2,\cdots,k-1\}$. If $a\in\curly{1,2,,\cdots,k-1}$, there is $K>0$ such that
\begin{equation}\label{eq:product1}
\begin{split}
    \abs{\frac{\prod_{j=1}^{k-1}\lbrb{\phi(z+j)-\phi(k)}}{\prod_{j=1}^{k-1}\lbrb{\phi(j)-\phi(k)}}}&\leq K\frac{|b|\phi'(k)}{\phi(k)-\phi(k-a)}\abs{\frac{\prod_{j=1,j+a \neq k}^{k-1}\phi(k)-\phi(j+a)}{\prod_{j=1,j+a \neq k}^{k-1}\lbrb{\phi(k)-\phi(j)}}}.
    \end{split}
\end{equation}
\end{lemma}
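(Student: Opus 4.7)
The plan is to apply Lemma~\ref{lem:kappa} termwise to the numerator $\prod_{j=1}^{k-1}(\phi(z+j)-\phi(k))$ of the ratio in \eqref{eq:product}. For each $j\in\{1,\ldots,k-1\}$, I would set $x=k$ and $y=j$ in Lemma~\ref{lem:kappa}, so that $x-y-z=k-j-z$ and $x-y-a=k-j-a$. In Case 1, the assumption $a\notin\{1,\ldots,k-1\}$ guarantees $k-j-a\neq 0$ for every $j$, and \eqref{eq:kappa} yields
\begin{equation*}
|\phi(k)-\phi(z+j)| \leq \frac{|k-j-z|}{|k-j-a|}\,|\phi(k)-\phi(a+j)|.
\end{equation*}
Multiplying these over $j$ and dividing by $\prod_{j=1}^{k-1}|\phi(j)-\phi(k)|$ reduces \eqref{eq:product} to the purely combinatorial task of bounding $\prod_{j=1}^{k-1}|k-j-z|/|k-j-a|$ uniformly in $k$ by some $K(z)$.

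For Case 2, I would isolate the unique problematic index $j_0=k-a$, at which $z+j_0=k+ib$. There \eqref{eq:kappa_2} supplies the replacement bound $|\phi(k)-\phi(k+ib)|\leq |b|\phi'(k)$, while for $j\neq j_0$ the inequality used in Case 1 still applies. Pulling the bad factor out of both numerator and denominator produces precisely the shape on the right-hand side of \eqref{eq:product1}, with the prefactor $|b|\phi'(k)/(\phi(k)-\phi(k-a))$ coming from the $j_0$-term and the remaining combinatorial constant being of the same type as in Case 1, except with the index $m=a$ absent.

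The main (and essentially sole) obstacle is controlling that combinatorial constant. After the substitution $m=k-j$, it becomes
\begin{equation*}
\prod_{\substack{1\leq m\leq k-1 \\ m\neq a\ \text{in Case 2}}}\sqrt{1+\frac{b^2}{(m-a)^2}},
\end{equation*}
which is bounded above, uniformly in $k$, by the infinite product $K(z)=\prod_{m\geq 1,\,m\neq a}\sqrt{1+b^2/(m-a)^2}$. Finiteness follows from $\log(1+x)\leq x$ together with the convergence of $\sum_{m\geq 1,\,m\neq a}(m-a)^{-2}$, which holds whenever $a\notin \mathbb{N}^+$ (and also when $a\in\mathbb{N}^+$ once the singular term is removed). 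Each factor depends continuously on $z$ and the tail can be dominated on compact subsets of $\{\Re(z)>-1+\mathbf{a}_\phi\}\setminus\{1,\ldots,k-1\}$, which yields the claimed local boundedness of $K(z)$ and completes the plan.
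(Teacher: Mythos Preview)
Your proposal is correct and follows essentially the same route as the paper: apply \eqref{eq:kappa} termwise with $x=k$, $y=j$ to obtain the factor $\prod_{j=1}^{k-1}|1-ib/(k-j-a)|$, observe that this product is locally uniformly convergent (your $\sum_m(m-a)^{-2}$ argument makes explicit what the paper calls ``clearly''), and in the integer case extract the term $j=k-a$ via \eqref{eq:kappa_2}. The only cosmetic difference is that you write the factor as $\sqrt{1+b^2/(m-a)^2}$ after the substitution $m=k-j$, which is the same quantity.
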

\begin{proof}
From relation \eqref{eq:kappa} of Lemma \ref{lem:kappa}, we get that, when $a\notin\curly{1,2,\cdots,k-1}$,
\begin{equation*}
    \begin{split}
        \abs{\frac{\prod_{j=1}^{k-1}\lbrb{\phi(z+j)-\phi(k)}}{\prod_{j=1}^{k-1}\lbrb{\phi(j)-\phi(k)}}}\leq \prod_{j=1}^{k-1}\abs{1-\frac{ib}{k-j-a}} \abs{\frac{\prod_{j=1}^{k-1}\lbrb{\phi(k)-\phi(j+a)}}{\prod_{j=1}^{k-1}\lbrb{\phi(k)-\phi(j)}}}.
    \end{split}
\end{equation*}
Clearly, the first product is locally uniformly convergent, and so \eqref{eq:product} follows. By taking out the term with $j=k-a$, relation \eqref{eq:product1} is a consequence of \eqref{eq:kappa_2}.
\end{proof}

The next lemma summarizes and expands Lemma 4.3 in \cite{BarkerSavov2021} in a more convenient form.\\

\begin{lemma}\label{lem:derPhi}
Let $\phi$ be a non-constant Bernstein function. Then, for any $c>0$ and $A>1$, there exists $C_{\phi,c,A}\in\lbbrb{1,\infty}$, increasing in $c$ for fixed $A$ and decreasing in $A$ for fixed $c$, such that
\begin{equation}\label{eq:derPhi}
    \frac{\phi'(x)}{\phi'(x+c)}\leq C_{\phi,c,A}, \quad \text{ for all $x>A$.}
\end{equation}
Moreover, $C^*_{\phi,c}:=\limi{A}C_{\phi,c,A}=\liminfi{x}\phi'(x)/\phi'(x+c)\in\lbbrb{1,\infty}$. If, for some $y_0>0$,
\[\liminfi{x}\frac{\phi'(x)}{\phi'(x+y_0)}:=\beta(y_0)>1,\]
then, for any $y>0$, $\liminfi{x}\phi'(x)/\phi'(x+y)=\beta(y)>1$ and $\phi(\infty)<\infty$, that is, the underlying subordinator is in fact a compound Poisson process and there exists $\gamma>0$ such that $\lim_{x\rightarrow\infty}e^{\gamma x}\phi'(x)= 0$.
\end{lemma}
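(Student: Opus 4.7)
The plan is built around one observation: $\phi'$ is log-convex on $(0,\infty)$. Indeed, from the Lévy-Khintchine representation \eqref{def:Bern},
\[
\phi'(x) = d + \int_0^\infty s\,e^{-xs}\mu(ds),
\]
which is the sum of a non-negative constant and the Laplace transform of the positive measure $s\mu(ds)$. Each summand is log-convex (constants trivially, the Laplace transform by Cauchy–Schwarz applied to $s e^{-xs}\mu(ds)$, which is the classical fact that completely monotone functions are log-convex), and log-convexity is preserved under addition via Hölder's inequality. Hence $g := \log\phi'$ is convex on $(0,\infty)$, and since $\phi'$ is smooth and strictly positive there, so is $g$.

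First I would deduce the monotonicity of the ratio. Convexity of $g$ means $x\mapsto g(x+c)-g(x)$ is non-decreasing in $x$ for every fixed $c>0$; equivalently, $x\mapsto \phi'(x)/\phi'(x+c)$ is non-increasing on $(0,\infty)$. Setting
\[
C_{\phi,c,A} := \frac{\phi'(A)}{\phi'(A+c)}\in[1,\infty),
\]
the bound \eqref{eq:derPhi} holds for all $x>A$, with finiteness from $\phi'>0$ and $C_{\phi,c,A}\geq 1$ from $\phi'$ being non-increasing. Monotonicity in $c$ at fixed $A$ follows from $\phi'(A+c)$ decreasing in $c$; monotonicity in $A$ at fixed $c$ is exactly the non-increasing property just established. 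Since the ratio is non-increasing and bounded below by $1$, the limit $\lim_{x\to\infty}\phi'(x)/\phi'(x+c)$ exists in $[1,\infty)$, coincides with the liminf, and equals $\lim_{A\to\infty} C_{\phi,c,A}=: C^*_{\phi,c}$; finiteness of $C^*_{\phi,c}$ follows by comparison with any $C_{\phi,c,A_0}<\infty$.

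For the second part, assume $\beta(y_0)>1$ for some $y_0>0$. Since $g$ is smooth, convex, and non-increasing (as $\phi'$ is), $g'$ is non-decreasing with $g'\leq 0$, so $-\gamma:=\lim_{x\to\infty}g'(x)$ exists in $[-\infty,0]$. Integrating,
\[
g(x+y_0)-g(x)=\int_x^{x+y_0}g'(u)\,du \xrightarrow[x\to\infty]{} -\gamma y_0,
\]
and since the left side tends to $-\log\beta(y_0)\in(-\infty,0)$, we identify $\gamma=\log\beta(y_0)/y_0\in(0,\infty)$. The same identity with $y_0$ replaced by arbitrary $y>0$ gives $\beta(y)=e^{\gamma y}>1$. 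Because $g'$ is non-decreasing with limit $-\gamma$, we have $g'(u)\leq -\gamma$ for every $u$, so $g(x)\leq g(A)-\gamma(x-A)$, i.e.\ $\phi'(x)\leq K e^{-\gamma x}$ for $x\geq A$. Consequently $e^{\gamma' x}\phi'(x)\to 0$ for any $\gamma'\in(0,\gamma)$, which proves the decay (relabel $\gamma'$ as $\gamma$).

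Finally, $\phi'(x)\to 0$ forces the drift $d=0$ in \eqref{def:Bern}, while $\int_A^\infty \phi'(u)\,du<\infty$ by the exponential decay, so $\phi(\infty)<\infty$. From \eqref{def:Bern} this forces $\mu(\mathbb{R}^+)<\infty$, and combined with $d=0$ identifies the underlying (possibly killed) subordinator as a compound Poisson process. The main obstacle, if any, is only in making sure log-convexity is invoked correctly and that $g'$ is handled carefully at infinity; the rest is elementary calculus of convex functions once the log-convexity of $\phi'$ is in hand.
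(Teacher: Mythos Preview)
Your argument is correct and actually cleaner than the paper's. The paper does not prove the first block of claims at all: it simply says ``the first part is a direct reformulation of Lemma~4.3 in \cite{BarkerSavov2021}''. For the second part, the paper argues directly from the definition of the $\liminf$: given $\beta(y_0)>1$, it fixes $\beta_\delta=\beta(y_0)^{-1}+\delta<1$, finds $x_0$ with $\phi'(x+y_0)\leq\beta_\delta\phi'(x)$ for $x\geq x_0$, iterates to obtain $\phi'(x_0+ny_0)\leq\beta_\delta^n\phi'(x_0)$, and reads off both $\phi(\infty)<\infty$ and the exponential decay by sandwiching with the monotonicity of $\phi'$. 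The claim that $\beta(y)>1$ for all $y$ is dismissed in one line ``from the monotonicity of $\phi'$''.

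Your route is organized entirely around the single observation that $\phi'$ is log-convex, which makes $x\mapsto\phi'(x)/\phi'(x+c)$ non-increasing. This yields a self-contained proof of \eqref{eq:derPhi} with the explicit optimal constant $C_{\phi,c,A}=\phi'(A)/\phi'(A+c)$, and turns the $\liminf$ into an honest limit, so the identification of $C^*_{\phi,c}$ is immediate. For the second part, your convex-analysis argument on $g=\log\phi'$ extracts the exact exponential rate $\gamma=\log\beta(y_0)/y_0$ and the closed form $\beta(y)=e^{\gamma y}$, which is strictly sharper than what the paper states or proves. The paper's iteration is slightly more elementary in that it never names log-convexity, but your approach is shorter, self-contained, and delivers more information.
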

\begin{proof}
The first part is a direct reformulation of Lemma 4.3 in \cite{BarkerSavov2021}. Assume that  $\underline{\lim}_{x\rightarrow\infty}\phi'(x)/\phi'(x+y)=\beta(y)=\beta>1$, for some $y>0$. From the monotonicity of $\phi'$, it is clear that this holds for any $y>0$. Therefore, for any $\delta<1-\beta^{-1}$, there is $x_0>0$ such that, for any $x\geq x_0$, $\phi'(x+y)\leq \beta_\delta\phi'(x),$ where $\beta_\delta=\beta^{-1}+\delta<1$. Recursively, we get  $\phi'(x_0+ny)\leq \beta^n_\delta\phi'(x_0)$. From this, together with the fact that $\phi$ is non-increasing, see \cite[Proposition 3.1, (2)]{BarkerSavov2021}, we first conclude that
\begin{equation*}
    \phi(\infty)-\phi(x_0)=\int_{x_0}^\infty \phi'(x)dx\leq \sum_{n\geq 1}\beta^n_\delta\phi'(x_0)=\frac{\beta_\delta}{1-\beta_\delta}\phi'(x_0)<\infty,
\end{equation*}
and hence $\phi(\infty)<\infty$, which is equivalent to the underlying subordinator having finite activity, see \cite[Chapter I]{Ber96}. Second, for any $x\in\lbbrbb{x_0+ny,x_0+(n+1)y}$, we get that
\[\phi'(x)\leq \phi'(x_0+ny)\leq \beta^n_\delta\phi'(x_0),\]
from where we conclude that there is $\gamma>0$ such that $\lim_{x\rightarrow\infty}e^{\gamma x}\phi'(x)=0$.
\end{proof}

The next lemma clarifies \eqref{eq:product} in a rather general case.\\

\begin{lemma}\label{lem:estimate}
Let $\phi$ be a Bernstein function such that $\limi{x}\phi''(x)/\phi'(x)=0$. Then, for any $1>\epsilon>0$ and $z=\ab$ with $a\geq 0$, we have that, for all $k$ large enough,
\begin{equation}\label{eq:estimate}
     \abs{\frac{\prod_{j=1}^{k-1}\lbrb{\phi(z+j)-\phi(k)}}{\prod_{j=1}^{k-1}\lbrb{\phi(j)-\phi(k)}}}\leq K\lbrb{\frac{\phi'(k)}{\phi(k)}}^{a(1-\epsilon)},
\end{equation}
for some $K=K(z)>0$, which is locally bounded on $\{z:\Re(z)>-1+\mathbf{a}_\phi\}\setminus\{1,2,\cdots\}$.
If instead $a\in(\mathbf{a}_\phi-1,0)$, then, for any $1>\epsilon>0$, we have that, for all $k$ large enough,
\begin{equation}\label{eq:estimate1}
     \abs{\frac{\prod_{j=1}^{k-1}\lbrb{\phi(z+j)-\phi(k)}}{\prod_{j=1}^{k-1}\lbrb{\phi(j)-\phi(k)}}}\leq K\lbrb{\frac{\phi'(k)}{\phi(k)}}^{a(1+\epsilon)},
\end{equation}
for some $K=K(z)>0$, which is locally bounded  on $\{z:\Re(z)>-1+\mathbf{a}_\phi\}\setminus\{1,2,\cdots\}$.
\end{lemma}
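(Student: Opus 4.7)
The plan is to reduce the complex-valued quotient to a real-valued product using relation \eqref{eq:kappa}, and then control the real product by a logarithmic sum-to-integral comparison whose integral evaluates essentially to $\log(\phi(k)/\phi'(k))$. The $\epsilon$-slack in the exponent will be supplied by the slow variation of $\phi'$ implied by the hypothesis $\phi''/\phi'\to 0$.

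\emph{Reduction.} Arguing as in the proof of Lemma \ref{lem:product}, relation \eqref{eq:kappa} yields
$$\left|\frac{\prod_{j=1}^{k-1}\bigl(\phi(z+j)-\phi(k)\bigr)}{\prod_{j=1}^{k-1}\bigl(\phi(j)-\phi(k)\bigr)}\right|\leq\prod_{j=1}^{k-1}\left|1-\frac{ib}{k-j-a}\right|\cdot P_k,\qquad P_k:=\prod_{j=1}^{k-1}\frac{|\phi(k)-\phi(a+j)|}{\phi(k)-\phi(j)}.$$
The first product is dominated by the convergent infinite product $\prod_{m\geq 1}|1-ib/(m-a)|$ (since $\sum_m b^2/(m-a)^2<\infty$), locally bounded on $\{a\notin\{1,2,\ldots\}\}$ and absorbed into $K(z)$. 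For $a>0$, at most $\lfloor a\rfloor$ indices (namely $j\in[k-\lfloor a\rfloor,k-1]$) satisfy $a+j>k$; using that $\phi'$ is decreasing and slowly varying, each such factor of $P_k$ is bounded in $k$ and absorbed into $K$. It remains to bound the "bulk" product $P_k^\star:=\prod(\phi(k)-\phi(a+j))/(\phi(k)-\phi(j))$ with $j$ restricted to $a+j<k$ (resp.\ all of $\{1,\ldots,k-1\}$ when $a<0$).

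\emph{Log expansion and the key sum.} Using $\log(1-x)\leq-x$ for $a\geq 0$, or $\log(1+x)\leq x$ for $a<0$, I obtain $\mathrm{sgn}(a)\log P_k^\star\leq -S_k$ with
$$S_k:=\sum_{j\geq j_0}\frac{|\phi(a+j)-\phi(j)|}{\phi(k)-\phi(j)},$$
the head $j<j_0$ being $O(1)$ since there $\phi(k)-\phi(j)\geq\phi(k)-\phi(j_0)\to\infty$ while the numerators stay bounded. The hypothesis $\phi''/\phi'\to 0$ gives $\log(\phi'(x+a)/\phi'(x))=\int_x^{x+a}(\phi''/\phi')(y)\,dy\to 0$, hence $\phi'(x+a)/\phi'(x)\to 1$ uniformly in $x$, and by the mean value theorem $|\phi(a+j)-\phi(j)|=|a|\phi'(j)(1+o_j(1))$; thus $S_k=(|a|+o(1))\,T_k+O(1)$ with $T_k:=\sum_j\phi'(j)/(\phi(k)-\phi(j))$. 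The crucial claim is that $T_k=(1+o(1))\log(\phi(k)/\phi'(k))$. For this I telescope $\log[(\phi(k)-\phi(j))/(\phi(k)-\phi(j+1))]=\log(1+(\phi(j+1)-\phi(j))/(\phi(k)-\phi(j+1)))$ from $j=j_0$ to $j=k-\lceil a\rceil-1$, which sums exactly to $\log[(\phi(k)-\phi(j_0))/(\phi(k)-\phi(k-\lceil a\rceil))]$ and, via $\phi(k)-\phi(k-\lceil a\rceil)=\lceil a\rceil\phi'(k)(1+o(1))$ (another consequence of $\phi''/\phi'\to 0$) and $\phi(k)-\phi(j_0)\sim\phi(k)$, equals $\log(\phi(k)/\phi'(k))(1+o(1))$. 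Comparing each log term to $\phi'(j)/(\phi(k)-\phi(j))$ via $x-x^2\leq\log(1+x)\leq x$ and the slow-variation replacement $\phi(j+1)-\phi(j)=\phi'(j)(1+o(1))$ identifies $T_k$ with this telescoping sum up to a $o(\log(\phi(k)/\phi'(k)))$ error.

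\emph{Conclusion and main obstacle.} Collecting the estimates, for $k$ large one obtains $-\log P_k^\star\geq a(1-\epsilon)\log(\phi(k)/\phi'(k))+O(1)$ when $a\geq 0$, and $\log P_k^\star\leq|a|(1+\epsilon)\log(\phi(k)/\phi'(k))+O(1)$ when $a<0$; exponentiating and absorbing the $O(1)$ into $K$ gives \eqref{eq:estimate} and \eqref{eq:estimate1}, and the local boundedness of $K(z)$ follows from that of the first product in the Reduction step. The main obstacle is that $\log(\phi(k)/\phi'(k))$ may grow extremely slowly (e.g.\ like $\log\log\log k$ for $\phi=\phi^{\mathbf{ln_2}}$), so an additive $O(1)$ error in $\log P_k^\star$ would already be fatal for the claimed exponent $a(1\mp\epsilon)$. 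Moreover, a naive termwise Riemann comparison of $T_k$ with $\int\phi'(x)dx/(\phi(k)-\phi(x))$ fails near $j=k$, where the summand is of the same order as its own increment; this is why the exact telescoping identity above is indispensable, and the uniform control of the $x^2$-remainder in $\log(1+x)$ is the single delicate point where the hypothesis $\phi''/\phi'\to 0$ is genuinely used.
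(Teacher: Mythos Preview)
Your argument is essentially the paper's: reduce to $a=\Re(z)$ via Lemma~\ref{lem:product}, discard a bounded head $j<j_0$ and tail $j>k-l$, apply $\log(1-x)\le -x$ (resp.\ $\log(1+x)\le x$), and compare the resulting sum $\sum\phi'(j)/(\phi(k)-\phi(j))$ with $\log(\phi(k)/\phi'(k))$. The only substantive difference is that the paper shifts $\phi'(j)\to\phi'(j-1)$ (at a $\phi''$-cost) to enable the monotone integral comparison
\[
\sum_{j=j_0}^{k-l}\frac{\phi'(j-1)}{\phi(k)-\phi(j)}\ \ge\ \int_{j_0-1}^{k-l}\frac{\phi'(x)}{\phi(k)-\phi(x)}\,dx=\log\frac{\phi(k)-\phi(j_0-1)}{\phi(k)-\phi(k-l)},
\]
whereas you reach the same logarithm via the telescoping identity; both collapse to $\log(\phi(k)/\phi'(k))+O(1)$ once $\phi(k)-\phi(k-l)\asymp\phi'(k)$ (Lemma~\ref{lem:derPhi}). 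Your MVT step $|\phi(a+j)-\phi(j)|=|a|\phi'(j)(1+o(1))$ is exactly where $\phi''/\phi'\to0$ enters and replaces the paper's explicit Taylor bookkeeping of the $\phi''$-remainder.

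One point of confusion to flag: your ``main obstacle'' paragraph is off. An additive $O(1)$ error in $\log P_k^\star$ is never fatal---it exponentiates to a constant absorbed in $K$---and since $\phi'(k)/\phi(k)\le 1/k$, the target $\log(\phi(k)/\phi'(k))\ge\log k\to\infty$ anyway. The genuine reason the $\epsilon$-loss is unavoidable is multiplicative, not additive: without $\phi''/\phi'\to0$ one only has $\phi(a+j)-\phi(j)\ge (a/C)\phi'(j)$ from Lemma~\ref{lem:derPhi}, which would yield exponent $a/C$ rather than $a(1-\epsilon)$. Likewise, the ``$x^2$-remainder'' in your telescoping is controlled simply by choosing the truncation index $l$ large (giving $u_j\lesssim 1/l$ near the edge), not by the hypothesis on $\phi''/\phi'$. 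Also note that for $a\in\{1,2,\dots\}$ your infinite product $\prod_m|1-ib/(m-a)|$ has a vanishing factor, so one must route that case through \eqref{eq:product1} as the paper does.
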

\begin{proof}
Let $a\geq0$ be non-integer. We use the inequality \eqref{eq:product} to replace $z$ with $a=\Re(z)$. The properties of $K$ stem from this inequality. The constants that appear hereafter are explicit and are locally bounded in $a$. Then, for any fixed $j\geq 1$, we have
\begin{equation*}
    \begin{split}
        \limi{k}\abs{\frac{\phi(k)-\phi(a+j)}{{\phi(k)-\phi(j)}}}\in\lbrb{0,\infty}.
    \end{split}
\end{equation*}
Therefore, hereafter we can always work with $k\geq j_0$, for fixed $j_0\geq 1$, and replace the first $j_0$ terms by constants.
  Next, note that
\begin{equation}\label{eq:elemBound}
    \begin{split}
        \abs{\frac{\phi(k)-\phi(a+j)}{{\phi(k)-\phi(j)}}}&=1-\frac{\phi(a+j)-\phi(j)}{\phi(k)-\phi(j)}.
    \end{split}
\end{equation}
Then, regardless of $a>0$, setting $l>a$, we get from \eqref{eq:elemBound} that,  for $j\leq k-l$,
\begin{equation*}
\begin{split}
   \abs{\frac{\phi(k)-\phi(a+j)}{{\phi(k)-\phi(j)}}}=1-\frac{\phi(a+j)-\phi(j)}{\phi(k)-\phi(j)}>0,
\end{split}
\end{equation*}
since $\phi(j+a)<\phi(k)$. For the terms with $k-1\geq j>k-l$, using that $\phi'$ is non-increasing (see \cite[Proposition 3.1, (2)]{BarkerSavov2021}), we can check that
\[\frac{\phi(a+j)-\phi(j)}{\phi(k)-\phi(j)}\leq a\frac{\phi'(k-l)}{\phi'(k)},\]
and from Lemma \ref{lem:derPhi} these are bounded for all large enough $k$. At the cost of a constant locally bounded in $a>0$, we can dispense of these terms. Then, using in \eqref{eq:elemBound} the standard inequality $\ln(1-x)\leq -x$, for $x\in\lbrb{0,1}$, we get
\begingroup\allowdisplaybreaks
\begin{align}
   \prod_{j=j_0}^{k-l}\abs{\frac{\phi(k)-\phi(a+j)}{{\phi(k)-\phi(j)}}} &\leq \exp\curly{-\sum_{j=j_0}^{k-l}\frac{\phi(a+j)-\phi(j)}{\phi(k)-\phi(j)}} \nonumber\\
   &=\exp\curly{-a\sum_{j=j_0}^{k-l}\frac{\phi'(j)}{\phi(k)-\phi(j)}-\sum_{j=j_0}^{k-l}\frac{\int_{j}^{j+a}\int_{j}^x\phi''(y)dydx}{\phi(k)-\phi(j)}} \nonumber\\
   &\leq \exp\curly{-a\sum_{j=j_0}^{k-l}\frac{\phi'(j)}{\phi(k)-\phi(j)}-\frac{a^2}{2}\sum_{j=j_0}^{k-l}\frac{\phi''(j)}{\phi(k)-\phi(j)}} \nonumber\\
   &\leq \exp\curly{-a\sum_{j=j_0}^{k-l}\frac{\phi'(j-1)}{\phi(k)-\phi(j)}-a\sum_{j=j_0}^{k-l}\frac{\phi''(j-1)}{\phi(k)-\phi(j)}-\frac{a^2}{2}\sum_{j=j_0}^{k-l}\frac{\phi''(j)}{\phi(k)-\phi(j)}} \nonumber\\
   &\leq \exp\curly{-a\sum_{j=j_0}^{k-l}\frac{\phi'(j-1)}{\phi(k)-\phi(j)}-\lbrb{a+\frac{a^2}{2}}\sum_{j=j_0}^{k-l}\frac{\phi''(j-1)}{\phi(k)-\phi(j)}}, \label{pr:bound:mainTerm}
    \end{align}
\endgroup
where in the last three inequalities we have used that $-\phi''$ is positive and non-increasing, which comes from $\phi'$ being completely monotone (see \cite[Proposition 3.1, (2)]{BarkerSavov2021}) and $\phi'(j)\geq \phi'(j-1)+\phi''(j-1)$. Estimating the first term in \eqref{pr:bound:mainTerm} by the integral
\[-a\sum_{j=j_0}^{k-l}\frac{\phi'(j-1)}{\phi(k)-\phi(j)}\leq -a\int_{j_0-1}^{k-l}\frac{\phi'(x)}{\phi(k)-\phi(x)}dx,\]
we get with the help of Lemma \ref{lem:derPhi} in the last inequality that, for all $k$ large enough,
\begingroup\allowdisplaybreaks
\begin{align}
    \prod_{j=j_0}^{k-l}&\abs{\frac{\phi(k)-\phi(a+j)}{{\phi(k)-\phi(j)}}}\leq \lbrb{\frac{\phi(k)-\phi(k-l)}{\phi(k)-\phi(j_0-1)}}^{a}\exp\curly{-\lbrb{a+\frac{a^2}{2}}\sum_{j=j_0}^{k-l}\frac{\phi''(j-1)}{\phi(k)-\phi(j)}} \nonumber\\
    &\hspace{5em}\leq \lbrb{l\frac{\phi'(k-l)}{{\phi(k)-\phi(j_0-1)}}}^{a}\exp\curly{-\lbrb{a+\frac{a^2}{2}}\sum_{j=j_0}^{k-l}\frac{\phi''(j-1)}{\phi(k)-\phi(j)}} \nonumber\\
    &\hspace{5em}\leq \lbrb{2C_{\phi,l}l}^a\lbrb{\frac{\phi'(k)}{{\phi(k)-\phi(j_0-1)}}}^{a}\exp\curly{-\lbrb{a+\frac{a^2}{2}}\sum_{j=j_0}^{k-l}\frac{\phi''(j-1)}{\phi(k)-\phi(j)}}. \label{pr:bound:mainTerm1}
    \end{align}
\endgroup
If $\lim_{x\rightarrow\infty}\phi''(x)/\phi'(x)=0$, then since $-\phi''$ is positive and non-increasing,
\[-\phi''(j-1)=\so{\phi'(j-1)},\]
and we conclude \eqref{eq:estimate} for any $\epsilon>0$ by estimating the very last exponent of \eqref{pr:bound:mainTerm1} with the same integral, which we have used in the steps prior to \eqref{pr:bound:mainTerm1}. We sketch briefly the proof for $a\in(\mathbf{a}_\phi-1,0)$. We can again work with indices larger than some fixed $j_0$. In this case, since
\begin{equation*}\label{eq:elemBound1}
    \begin{split}
        \abs{\frac{\phi(k)-\phi(a+j)}{{\phi(k)-\phi(j)}}}&=1+\frac{\phi(j)-\phi(a+j)}{\phi(k)-\phi(j)},
    \end{split}
\end{equation*}
we do not need to discard terms with $j\geq k-l$, yet we shall keep this truncation. Then, using that $\ln(1+x)\leq x$, for $x\in(0,1)$, and $\phi'(j)\geq \phi'(j-1)+\phi''(j-1)$,
\begin{equation*}\label{pr:bound:mainTerm2}
\begin{split}
   \prod_{j=j_0}^{k-l}\biggl|\frac{\phi(k)-\phi(a+j)}{{\phi(k)-\phi(j)}}\biggr|&\leq \exp\curly{\sum_{j=j_0}^{k-l}\frac{\phi(j)-\phi(a+j)}{\phi(k)-\phi(j)}}\\
   &=\exp\curly{a\sum_{j=j_0}^{k-l}\frac{\phi'(j)}{\phi(k)-\phi(j)}-\sum_{j=j_0}^{k-l}\frac{\int_{j+a}^{j}\int_{x}^j\phi''(y)dydx}{\phi(k)-\phi(j)}}\\
   &\leq \exp\curly{a\sum_{j=j_0}^{k-l}\frac{\phi'(j)}{\phi(k)-\phi(j)}-\frac{a^2}{2}\sum_{j=j_0}^{k-l}\frac{\phi''(j+a)}{\phi(k)-\phi(j)}}\\
   &\leq \exp\curly{a\sum_{j=j_0}^{k-l}\frac{\phi'(j-1)}{\phi(k)-\phi(j)}+a\sum_{j=j_0}^{k-l}\frac{\phi''(j-1)}{\phi(k)-\phi(j)}-\frac{a^2}{2}\sum_{j=j_0}^{k-l}\frac{\phi''(j+a)}{\phi(k)-\phi(j)}}.
    \end{split}
\end{equation*}
Now, from Lemma \ref{lem:derPhi} and the assumptions, we get that
\[\phi''(j+a)=\so{\phi'(j)},\qquad \phi''(j-1)=\so{\phi'(j)},\]
and the proof follows as in the case for $a>0$.

The case where $a$ is positive integer is treated similarly by applying \eqref{eq:product1} to \eqref{pr:bound:mainTerm} and working in the same way with the remaining terms as in \eqref{pr:bound:mainTerm}, where after passing at the exponential level in \eqref{pr:bound:mainTerm}, we can add a term with index $j=k-a$ at the expense of a constant.
\end{proof}

The next result improves on the previous lemma. Since $\phi$ is increasing, we shall use $\varphi$ for its inverse.\\

\begin{lemma}\label{lem:estimate1}
Let $\phi$ be a Bernstein function such that $\limi{x}\phi''(x)/\phi'(x)=0$. Let $z=\ab$. If $a\geq 0$, then, for all $k$ large enough, it holds that
\begin{equation}\label{eq:estimate_1_1}
     \abs{\frac{\prod_{j=1}^{k-1}\lbrb{\phi(z+j)-\phi(k)}}{\prod_{j=1}^{k-1}\lbrb{\phi(j)-\phi(k)}}}\leq K\lbrb{\frac{\phi'(k)}{{\phi(k)}}}^{a}e^{8C\lbrb{a+\frac{a^2}{2}}\frac{\phi(k)}{k\phi'(k)}\lbrb{\frac{\ln(k)}{k}+\frac{1}{\varphi(\phi(k)/2)}}},
\end{equation}
for some $K:=K(z)>0$, which is locally bounded on $\{z:\Re(z)\geq 0\}\setminus\{1,2,\cdots\}$, and constant $C>0$. If  $\limsupi{x}\phi'(x)/\phi'(2x)<\infty$, we have that, for $a\geq 0$,
\begin{equation}\label{eq:estimate_1_2}
    \abs{\frac{\prod_{j=1}^{k-1}\lbrb{\phi(z+j)-\phi(k)}}{\prod_{j=1}^{k-1}\lbrb{\phi(j)-\phi(k)}}}\leq K\lbrb{\frac{\phi'(k)}{{\phi(k)}}}^{a}e^{C\lbrb{a+\frac{a^2}{2}}\phi(k)\lbrb{\frac{8c\ln{k}}{k^2\phi'(k)}+\frac{\varphi'(y_k)}{\varphi^2(y_k)}}},
\end{equation}
for some $K:=K(z)>0$, which is locally bounded on $\{z:\Re(z)\geq 0\}\setminus\{1,2,\cdots\}$, and constants $C>0$ and $c>0$ are constants, where $y_k\in\lbbrbb{\phi(k)/2,\phi(k/2)}$ is a solution of the equation
\[\phi(k)=\phi(x)+x\phi'(x).\]
When $a\in(-1+\mathbf{a}_\phi,0)$, the claims hold with $-a+a^2/2$ in the exponents above and a possibly different constant $C$, which in this case may depend on $a$.
\end{lemma}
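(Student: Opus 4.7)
The plan is to sharpen the argument of the proof of Lemma \ref{lem:estimate} by retaining the quadratic correction term explicitly instead of absorbing it into the $(1\mp\epsilon)$ exponent. For $a\geq 0$, repeating the chain of inequalities in that proof up to \eqref{pr:bound:mainTerm1} yields
\[
\prod_{j=j_0}^{k-l}\abs{\frac{\phi(k)-\phi(a+j)}{\phi(k)-\phi(j)}}\;\leq\; K_0\lbrb{\frac{\phi'(k)}{\phi(k)}}^{a}\exp\lbcurly{\lbrb{a+\frac{a^2}{2}}\,S_k}\rbcurly,\qquad S_k:=\sum_{j=j_0}^{k-l}\frac{-\phi''(j-1)}{\phi(k)-\phi(j)},
\]
where $K_0=K_0(z)$ is locally bounded on the claimed region, absorbing the combinatorial factor $\lbrb{2C_{\phi,l}l}^a$ together with the ratio $\lbrb{\phi(k)/(\phi(k)-\phi(j_0-1))}^a\to 1$. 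The task is therefore to dominate $S_k$ by the explicit quantity appearing in the exponent of \eqref{eq:estimate_1_1}.

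The decomposition that drives \eqref{eq:estimate_1_1} splits $S_k$ at the threshold $j_\ast:=\lfloor\varphi(\phi(k)/2)\rfloor$, where $\phi(j_\ast)\approx\phi(k)/2$. On the low range $j\leq j_\ast$ we use $\phi(k)-\phi(j)\geq\phi(k)/2$ together with the elementary summation-by-parts estimate $\sum -\phi''(j-1)\leq \phi'(j_0-1)\leq \phi'(0)$, and match the resulting bound against the summand $1/\varphi(\phi(k)/2)$ of the target by means of the Bernstein inequalities $x\phi'(x)\leq \phi(x)$ and monotonicity of $x\mapsto \phi(x)/x$ (see \cite[Proposition 3.1]{BarkerSavov2021}), which together allow one to trade $1/\phi(k)$ for $1/(k\phi'(k)\varphi(\phi(k)/2))\cdot\phi(k)$. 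On the high range $j_\ast<j\leq k-l$ the concavity bound $\phi(k)-\phi(j)\geq(k-j)\phi'(k)$ reduces the problem to $\frac{1}{\phi'(k)}\sum_{j>j_\ast}\frac{-\phi''(j-1)}{k-j}$, which is then split once more at a cutoff like $k-j\approx\sqrt{k}$: the near-$k$ piece is controlled by monotonicity of $-\phi''$ and the elementary tail bound $\sum 1/m\leq\ln k$, producing the term $\ln k/k$, while the intermediate piece is subsumed by the low-range estimate.

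For the refined bound \eqref{eq:estimate_1_2}, the additional assumption $\limsupi{x}\phi'(x)/\phi'(2x)<\infty$ allows one to replace the crude square-root cutoff by the natural point $y_k$ solving $\phi(k)=\phi(y_k)+y_k\phi'(y_k)$, whose existence in $[\phi(k)/2,\phi(k/2)]$ follows by the intermediate value theorem applied to $x\mapsto\phi(x)+x\phi'(x)$ combined with the doubling control of $\phi'$; $y_k$ arises from a mean-value-type identity for $\phi(k)-\phi(x)$ and its contribution produces the factor $\varphi'(y_k)/\varphi^2(y_k)$. The case $a\in(-1+\mathbf{a}_\phi,0)$ re-runs the analogue of \eqref{pr:bound:mainTerm1} derived at the end of the proof of Lemma \ref{lem:estimate}, whose exponent coefficient is $(-a+a^2/2)$; the same decomposition of $S_k$ applies verbatim, the constant $C$ possibly now depending on $a$ through the bound $\ln(1+x)\leq x$ applied to the positive quantity $(\phi(j)-\phi(a+j))/(\phi(k)-\phi(j))$.

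The delicate point will be the simultaneous matching of the two pieces of $S_k$ with the two summands in the target exponent: the logarithm $\ln k/k$ must emerge exclusively from the near-$k$ piece (where $k-j$ is at most of order $\sqrt{k}$), while $1/\varphi(\phi(k)/2)$ must emerge from the low range via the Bernstein inequalities. Keeping the multiplicative constants universal in $\phi$ is essential, because the resulting bound is summed over $k$ in \eqref{eq:NegMom}, and any $k$-dependent blowup of the prefactor $K$ would ruin the convergence of that series.
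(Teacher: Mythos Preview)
Your plan correctly identifies the starting point---inequality \eqref{pr:bound:mainTerm1}---and the need to bound the correction sum $S_k=\sum_{j}\frac{-\phi''(j-1)}{\phi(k)-\phi(j)}$. However, the way you propose to estimate $S_k$ has a genuine gap that makes the argument break down precisely for the slowly-growing Bernstein functions (such as $\phi(x)=\ln\ln(x+e)$) that the lemma is designed to cover.

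The problem is in your treatment of the high range $j_\ast<j\leq k-l$. After using $\phi(k)-\phi(j)\geq(k-j)\phi'(k)$ you are left with $\frac{1}{\phi'(k)}\sum_{j>j_\ast}\frac{-\phi''(j-1)}{k-j}$, and you split at $k-j\approx\sqrt{k}$. The near-$k$ piece does give the $\ln k/k$ contribution as you say. But the ``intermediate'' piece, $j_\ast<j<k-\sqrt{k}$, is \emph{not} subsumed by your low-range estimate. Bounding $k-j\geq\sqrt{k}$ and telescoping $\sum(-\phi'')\leq\phi'(j_\ast)$ yields $\frac{\phi'(b_k)}{\sqrt{k}\,\phi'(k)}$ with $b_k=\varphi(\phi(k)/2)$. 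For $\phi(x)=\ln\ln(x+e)$ one has $b_k\sim e^{\sqrt{\ln k}}$, $\phi'(b_k)\sim (e^{\sqrt{\ln k}}\sqrt{\ln k})^{-1}$, $\phi'(k)\sim(k\ln k)^{-1}$, and this ratio is of order $\sqrt{k}\sqrt{\ln k}\,e^{-\sqrt{\ln k}}$, which diverges and is much larger than both your low-range bound $C/\phi(k)$ and the target term $\frac{\phi(k)}{k\phi'(k)\varphi(\phi(k)/2)}\sim\ln k\,\ln\ln k\,e^{-\sqrt{\ln k}}$. The $\sqrt{k}$ cutoff is simply too crude when $\phi$ grows this slowly. (Your low-range ``trade'' of $1/\phi(k)$ for $\frac{\phi(k)}{k\phi'(k)\varphi(\phi(k)/2)}$ also fails in the direction you need, though this is harmless since that contribution is bounded and can be absorbed into $K$.)

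What is missing is the Bernstein-function inequality $-\phi''(x)\leq 2\phi(x)/x^2$ (see \cite[Proposition~3.1]{PatieSavov2018}). The paper uses it to dominate $\int\frac{-\phi''(x)}{\phi(k)-\phi(x)}dx$ on $[b_k,k-l]$ by $\frac{2\phi(k)}{\phi'(k)}\int_{b_k}^{k-l}\frac{dx}{x^2(k-x)}$, an integral that is computed explicitly after rescaling $x\mapsto x/k$ and splitting at $1/2$; the piece $[b_k/k,1/2]$ produces the $1/\varphi(\phi(k)/2)$ term via $\int x^{-2}dx$, and the piece $[1/2,1-1/k]$ produces $\ln k/k$ via $\int(1-x)^{-1}dx$. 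For \eqref{eq:estimate_1_2} the doubling assumption gives the sharper replacement $-\phi''(x)\leq c\phi'(x)/x$; after the change of variable $y=\phi(x)$ the integral becomes $\int\frac{dy}{\varphi(y)(\phi(k)-y)}$, and $y_k$ is the \emph{minimiser} of the denominator $f_k(y)=\varphi(y)(\phi(k)-y)$ on $[\phi(k)/2,\phi(k/2)]$---not a mean-value point---whose critical-point equation $f_k'(y_k)=0$ is exactly $\phi(k)=\phi(x)+x\phi'(x)$ at $x=\varphi(y_k)$, with endpoint signs coming from $x\phi'(x)\leq\phi(x)$ and monotonicity of $\phi'$ rather than from the doubling hypothesis.
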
~ 

\begin{remark}\label{rem:estimate1}
We note that $\overline{\lim}_{x\rightarrow\infty}\phi'(x)/\phi'(2x)<\infty$ implies that $\lim_{x\rightarrow\infty}\phi''(x)/\phi'(x)=0$. Indeed, from the proof below,
\[\phi'(x)\geq -\int_{x}^{2x}\phi''(v)dv\geq -x\phi''(2x)>0.\]
Therefore, from the assumption, we have that $\overline{\lim}_{x\rightarrow\infty}x\phi''(2x)/\phi'(2x)<\infty$.
\end{remark}

\begin{proof}[Proof of Lemma \ref{lem:estimate1}]
For the case $a>0$, we use the inequality \eqref{pr:bound:mainTerm1} with $u_a=a+a^2/2$ and some fixed $l$ such that $l>a$. We work with $k>j_0+2l$. For $j_0\leq j< k-l$, we have the chain of inequalities
\begin{equation*}
    \begin{split}
        \phi(j)-\phi(j-2)\leq 2\phi'(j-2)\leq 2 C_{\phi,2,j_0-2}\phi'(j),
    \end{split}
\end{equation*}
where we have used that $\phi'$ is non-increasing (see \cite[Proposition 3.1, (2)]{BarkerSavov2021}) and \eqref{eq:derPhi}. Therefore, using $\phi(k)-\phi(j)\geq (k-j)\phi'(j)$, we get for some $C>0$ that
\begin{equation*}
    \begin{split}
        \phi(k)-\phi(j-2)&\leq \phi(k)-\phi(j)+2\phi'(j-2)\leq \phi(k)-\phi(j)+2 C_{\phi,2,j_0-2}\phi'(j)\\
        &\leq \phi(k)-\phi(j)+2 C_{\phi,2,j_0-2}\frac{\phi(k)-\phi(j)}{k-j}\leq \lbrb{\phi(k)-\phi(j)}\lbrb{1+\frac{2 C_{\phi,2,j_0-2}}{l}}\\
        &=C\lbrb{\phi(k)-\phi(j)}.
    \end{split}
\end{equation*}
We employ this fact and that $-\phi''$ is positive and non-increasing to estimate the exponent in \eqref{pr:bound:mainTerm1} for $j_0>2$ as follows
\begin{equation}\label{pr:estimate1a}
    \begin{split}
        -u_a\sum_{j=j_0}^{k-l}\frac{\phi''(j-1)}{\phi(k)-\phi(j)}&\leq Cu_a\int_{j_0-2}^{k-l}\frac{-\phi''(x)}{\phi(k)-\phi(x)}dx.
    \end{split}
\end{equation}
Next, recalling that $\varphi=\phi^{-1}$, we set $b_k=\varphi\lbrb{\phi(k)/2}$. Using the fact that $2\phi(x/2)\geq \phi(x)$, for $x>0$, see \cite[Chapter III (6)]{Ber96}, we can deduce that $b_k\leq k/2.$ We obtain \eqref{eq:estimate_1_1} by estimating \eqref{pr:estimate1a} in the following fashion
\begin{equation}\label{pr:intermediate}
\begin{split}
        -u_a\sum_{j=j_0}^{k-l}\frac{\phi''(j-1)}{\phi(k)-\phi(j)}&\leq Cu_a\int_{j_0-2}^{k-l}\frac{-\phi''(x)}{\phi(k)-\phi(x)}dx\\
        &\leq Cu_a\lbrb{\int_{j_0-2}^{b_k}\frac{1}{x^2}dx+\frac{2\phi(k)}{\phi'(k)}\int_{b_k}^{k-l}\frac{1}{x^2\lbrb{k-x}}dx}\\
        &\leq u_a\lbrb{\frac{4}{j_0-2}+\frac{2\phi(k)}{k^2\phi'(k)}\int_{\frac{b_k}{k}}^{1-\frac{1}{k}}\frac{1}{x^2(1-x)}dx},
    \end{split}
\end{equation}
where we have used in the first inequality that $(i)$ $-\phi''(x)\leq 2\phi(x)/x^2$ (see \cite[Proposition 3.1, (3.3)]{PatieSavov2018}), $(ii)$ $\phi$ is increasing, and $(iii)$ $\phi(k)-\phi(x)\geq (k-x)\phi'(k)$ as $\phi'$ is decreasing (see \cite[Proposition 3.1, (2)]{BarkerSavov2021}). Since $b_k\leq k/2$, we can split above integral at $1/2$ and estimate the bounded terms to get
\begin{equation*}
    \begin{split}
    \frac{\phi(k)}{k^2\phi'(k)}\int_{\frac{b_k}{k}}^{1-\frac{1}{k}}\frac{1}{x^2(1-x)}dx
        &\leq 8\frac{\phi(k)}{k\phi'(k)}\lbrb{\frac{1}{\varphi(\phi(k)/2)}+\frac{\ln(k)}{k}}.
    \end{split}
\end{equation*}
Plugging this in \eqref{pr:estimate1a} concludes the proof of \eqref{eq:estimate_1_1} by an application of \eqref{pr:bound:mainTerm1}. To prove \eqref{eq:estimate_1_2}, we deviate from \eqref{pr:intermediate} by estimating the integral in \eqref{pr:estimate1a} in a different way. First, we note from the monotonicity of $-\phi''$ that
\[\phi'(x)=-\int_x^\infty \phi''(v)dv\geq -\int_{x}^{2x}\phi''(v)dv\geq -x\phi''(2x).\]
Since $\overline{\lim}_{x\rightarrow\infty}\phi'(x)/\phi'(2x)<\infty$, there is a constant $c$ such that, for $x>(j_0-1)/2$, we have $-\phi''(x)\leq c\phi'(x)/x$. Substituting this in the integral in \eqref{pr:intermediate}, we obtain
\begin{equation*}\label{pr:intermediate1}
    \begin{split}
        -u_a\sum_{j=j_0}^{k-l}\frac{\phi''(j-1)}{\phi(k)-\phi(j)}
        &\leq Cu_a\lbrb{\frac{4}{j_0-2}+c\int_{b_k}^{k-l}\frac{\phi'(x)}{x\lbrb{\phi(k)-\phi(x)}}dx}\\
        &= Cu_a\lbrb{\frac{4}{j_0-2}+c\int_{b_k}^{\frac{k}2}\frac{\phi'(x)}{x\lbrb{\phi(k)-\phi(x)}}dx+c\int_{\frac{k}2}^{k-l}\frac{\phi'(x)}{x\lbrb{\phi(k)-\phi(x)}}dx}.
    \end{split}
\end{equation*}
Recall that $\varphi=\phi^{-1}.$ Then
\begin{equation*}
    \begin{split}
        \int_{b_k}^{\frac{k}2}\frac{\phi'(x)}{x\lbrb{\phi(k)-\phi(x)}}dx&= \int_{\frac{\phi(k)}{2}}^{\phi\lbrb{\frac{k}{2}}}\frac{1}{\varphi(y)\lbrb{\phi(k)-y}}dy.
    \end{split}
\end{equation*}
We note that, letting $f_k(y):=\varphi(y)\lbrb{\phi(k)-y}, f_k'(y)=\varphi'(y)\lbrb{\phi(k)-y}-\varphi(y)$, we have
\begin{equation*}
    \begin{split}
        f'_k\lbrb{\frac{\phi(k)}{2}}&=\varphi'\lbrb{\frac{\phi(k)}{2}}\frac{\phi(k)}{2}-\varphi\lbrb{\frac{\phi(k)}{2}}\geq 0;\\
        f'_k\lbrb{\phi\lbrb{\frac{k}{2}}}&=\frac{\phi(k)-\phi\lbrb{\frac{k}{2}}-\frac{k}{2}\phi'\lbrb{\frac{k}{2}}}{\phi'\Bigl(\frac{k}{2}\Bigr)}\leq 0,
    \end{split}
\end{equation*}
where the first inequality follows from $\varphi'(x)x\geq \varphi(x)$, which comes from the observation  that $\phi'(x)\leq \phi(x)/x$, and the second one follows from the fact that $\phi'$ is decreasing. Therefore, there is $y_k\in[\phi(k)/2,\phi(k/2)]$ that minimizes $f_k$ on this region, $\phi(k)-y_k=\varphi(y_k)/\varphi'(y_k)$ and $f_k(y_k)=\varphi^2(y_k)/\varphi'(y_k)$. From these, we also get that $\varphi(y_k)/\varphi'(y_k)\geq \phi(k)-\phi(k/2)$. Hence, we obtain that
\begin{equation*}
    \begin{split}
        \int_{b_k}^{\frac{k}2}\frac{\phi'(x)}{x\lbrb{\phi(k)-\phi(x)}}dx&\leq \frac{\varphi'(y_k)}{\varphi^2(y_k)}\lbrb{\phi\lbrb{\frac{k}{2}}-\frac{\phi(k)}{2}}\leq \phi(k)\frac{\varphi'(y_k)}{\varphi^2(y_k)}. 
    \end{split}
\end{equation*}
Recall that $\phi'(x)\leq \phi(x)/x$ (see \cite[Proposition 3.1, (3.3)]{PatieSavov2018}) and that $\phi$ and $\phi'$ are  increasing and decreasing, respectively. We can then bound the remaining integral as follows
\begin{equation*}
    \begin{split}
        \int_{\frac{k}2}^{k-l}\frac{\phi'(x)}{x\lbrb{\phi(k)-\phi(x)}}dx&\leq  \int_{\frac{k}2}^{k-l}\frac{\phi(x)}{x^2\lbrb{\phi(k)-\phi(x)}}dx\leq\frac{\phi(k)}{\phi'(k)}\int_{\frac{k}2}^{k-l}\frac{1}{x^2\lbrb{k-x}}dx\leq \frac{4\phi(k)\ln{k}}{k^2\phi'(k)}.
    \end{split}
\end{equation*}
We have thus proved \eqref{eq:estimate_1_2}.  The case $a<0$ is treated similarly using \eqref{pr:bound:mainTerm1} instead. We sketch the arguments.  We pick $j_0>-2a$, $l>-a$ and $k>2l+j_0$. The first two terms in \eqref{pr:bound:mainTerm1} are treated exactly as above. The last one is taken care of as follows
\begin{equation}\label{pr:estimate1b}
    \begin{split}
        -u_a\sum_{j=j_0}^{k-l}\frac{\phi''(j+a)}{\phi(k)-\phi(j)}&\leq C'u_a\int_{j_0+a-1}^{k-l}\frac{-\phi''(x)}{\phi(k)-\phi(x)}dx,
    \end{split}
\end{equation}
where $C'$ is chosen precisely as in the arguments preceding \eqref{pr:estimate1b}, which requires us to estimate $\phi(k)-\phi(j)$ from below with $C'(\phi(k)-\phi(j+a-1))$. This is possible for fixed $a$.
\end{proof}

Next, for any $\phi$ and $z=\ab$, we set
\begin{equation}\label{eq:H}
    H_\phi(z,k):=\frac{\prod_{j=1}^k\lbrb{\phi(k)-\phi(z+j)}}{\prod_{j=1}^{k-1}\lbrb{\phi(k)-\phi(j)}}\frac{1}{\phi(k)} \frac{\Gamma(z+1)}{W_{\phi_{(k)}}(z+1)},
\end{equation}
where $\phi_{(k)}(x)=\phi(x+k)-\phi(k)$, $k\geq0$ is a Bernstein function. We provide some estimates for $H_\phi$.\\

\begin{lemma}\label{lem:H}
Let $\phi$ be a Bernstein function such that $\limi{x}\phi''(x)/\phi'(x)=0$. Then, for any $1>\epsilon>0$ and $z=\ab$ with $a\geq 0$, we have that, for all $k$ large enough,
\begin{equation*}\label{eq:estimateH}
     \abs{H_\phi(z,k)}\leq K\frac{\lbrb{\phi'(k)}^{1-a\epsilon}}{\lbrb{\phi(k)}^{1+a-a\epsilon}},
\end{equation*}
for some $K:=K(z)>0$, which is locally bounded on the region $\{z:\Re(z)>0\}\setminus\{1,2,\cdots\}$. If $a\in(\mathbf{a}_\phi\ind{\phi(0)=0}-1,0)$, then
\begin{equation}\label{eq:estimate1a}
     \abs{H_\phi(z,k)}\leq K\frac{\lbrb{\phi'(k)}^{1+a\epsilon}}{\lbrb{\phi(k)}^{1+a+a\epsilon}},
\end{equation}
for some $K:=K(z)$, which is locally bounded on $\{z:0\geq \Re(z)>-1+\mathbf{a}_\phi\ind{\phi(0)=0}\}$. Under the conditions of {\rm Lemma} \ref{lem:estimate1}, we have, for $a>-1+\mathbf{a}_\phi\ind{\phi(0)=0}$, the bounds
\begin{equation}\label{eq:estimate2}
\begin{split}
&\abs{H_\phi(z,k)}\leq K\frac{\phi'(k)}{\lbrb{\phi(k)}^{1+a}}e^{8C\lbrb{|a|+\frac{a^2}{2}}\frac{\phi(k)}{k\phi'(k)}\lbrb{\frac{\ln(k)}{k}+\frac{1}{\varphi(\phi(k)/2)}}},\\
&\abs{H_\phi(z,k)}\leq K\frac{\phi'(k)}{\lbrb{\phi(k)}^{1+a}}e^{C\lbrb{|a|+\frac{a^2}{2}}\phi(k)\lbrb{\frac{8c\ln{k}}{k^2\phi'(k)}+\frac{\varphi'(y_k)}{\varphi^2(y_k)}}},
\end{split}
\end{equation}
where the constants have the same properties as in {\rm Lemma} \ref{lem:estimate1} and $y_k\in\lbbrbb{\phi(k)/2,\phi(k/2)}$ is a solution of the equation $\phi(k)=\phi(x)+x\phi'(x)$.
\end{lemma}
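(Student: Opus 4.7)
The plan is to decompose
\[
H_\phi(z,k)=P_k(z)\cdot R_k(z)\cdot\frac{\Gamma(z+1)}{W_{\phi_{(k)}}(z+1)},\qquad P_k(z):=\frac{\prod_{j=1}^{k-1}(\phi(k)-\phi(z+j))}{\prod_{j=1}^{k-1}(\phi(k)-\phi(j))},\qquad R_k(z):=\frac{\phi(k)-\phi(z+k)}{\phi(k)},
\]
and to bound each factor separately. This splits off the bulk quantity $P_k(z)$, already controlled by Lemmas \ref{lem:estimate} and \ref{lem:estimate1}, from the extra $j=k$ boundary term $R_k(z)$ and from the inverse Bernstein--Gamma factor, which must be handled by Lemma \ref{lem:boundW}.

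For $P_k(z)$ I would quote Lemmas \ref{lem:estimate} and \ref{lem:estimate1} verbatim. For $R_k(z)$ I would parametrize $w(t)=k+tz$, $t\in[0,1]$, use the bound $|\phi'(w(t))|\le\phi'(\Re(w(t)))=\phi'(k+ta)$ recorded in Lemma \ref{lem:kappa}, and for $a<0$ absorb the bounded shift $ta$ via Lemma \ref{lem:derPhi}; this yields $|R_k(z)|\le C(z)\phi'(k)/\phi(k)$ with $C$ locally bounded in $z$.

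The technical heart of the argument, and the step I expect to cost the most care, is the uniform-in-$k$ bound
\[
\bigl|W_{\phi_{(k)}}(z+1)\bigr|^{-1}\le C(z,a)\,\phi'(k)^{-a},
\]
which I would obtain by applying Lemma \ref{lem:boundW} to the shifted Bernstein function $\phi_{(k)}$. Since $\phi_{(k)}(0)=0$, one has $\mathbf{\bar{a}}_{\phi_{(k)}}=0$, and Lemma \ref{lem:boundW} applies throughout the range $\Re(z)>-1+\mathbf{a}_\phi\ind{\phi(0)=0}$ prescribed in the statement. The three ingredients of \eqref{eq:boundW_1} must each be compared to $\phi'(k)$ uniformly in $k$: from $\phi_{(k)}(x)=\int_k^{k+x}\phi'(v)\,dv$ combined with $|\phi'(z)|\le\phi'(\Re(z))$ one obtains $|\phi_{(k)}(z+j)|\le(|z|+j)\phi'(k)$ for $j=1,2$; Lemma \ref{lem:derPhi} gives $\phi_{(k)}(1)\ge\phi'(k+1)\ge c\phi'(k)$; and the two-sided estimate $c\,x\phi'(k)\le\phi_{(k)}(x)\le x\phi'(k)$ on $x\in[1,2+a]$ turns the integral factor into $e^{-\int_1^{2+a}\ln\phi_{(k)}(x)\,dx}\le C(z,a)\phi'(k)^{-(1+a)}$. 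Multiplying these estimates produces the advertised $\phi'(k)^{-a}$ up to a constant locally bounded in $z$.

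Combining the three factor bounds and telescoping powers of $\phi'(k)$ and $\phi(k)$ directly recovers the polynomial estimates of Lemma \ref{lem:H}: for $a\ge0$ the product becomes $K(z,\epsilon)(\phi'(k)/\phi(k))^{a(1-\epsilon)}\cdot(\phi'(k)/\phi(k))\cdot\phi'(k)^{-a}=K(z,\epsilon)\phi'(k)^{1-a\epsilon}/\phi(k)^{1+a-a\epsilon}$, and the case $a<0$ is identical with $1+\epsilon$ in place of $1-\epsilon$, yielding \eqref{eq:estimate1a}. For the refined bounds \eqref{eq:estimate2} the same three-factor decomposition applies verbatim, with the sharper form of $P_k$ from Lemma \ref{lem:estimate1} contributing $(\phi'(k)/\phi(k))^{a}$ together with the prescribed exponential factors, which are carried through unchanged. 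The persistent subtlety is constant bookkeeping: every mean-value-type step introduces $|z|$-dependence that must be tracked to keep the final constant locally bounded, which is precisely what the local-boundedness clauses of Lemmas \ref{lem:estimate}, \ref{lem:estimate1} and \ref{lem:boundW} are designed to supply.
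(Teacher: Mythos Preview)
Your decomposition and the treatment of each factor mirror the paper's proof exactly for $a\ge 0$: the paper too splits off $P_k$, the boundary factor $\phi(k)-\phi(z+k)$, and $\Gamma(z+1)/W_{\phi_{(k)}}(z+1)$, and bounds the last via Lemma~\ref{lem:boundW} applied to $\phi_{(k)}$ with the same $\phi'(k)$-bookkeeping you outline.

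For $a<0$ there are two genuine gaps. First, your claim that ``Lemma~\ref{lem:boundW} applies throughout the range $\Re(z)>-1+\mathbf{a}_\phi\ind{\phi(0)=0}$'' is an overclaim: since $\mathbf{\bar a}_{\phi_{(k)}}=0$, the hypothesis of Lemma~\ref{lem:boundW} for $\phi_{(k)}$ only gives $\Re(z)>-2$, and your two-sided estimate on $[1,2+a]$ is stated for an interval that is empty or reversed once $a\le-1$. When $\phi(0)=0$ and $\mathbf a_\phi<0$ the lemma must cover values $a\le-1$ (and $a\le-2$ if $\mathbf a_\phi<-1$), and there your direct route breaks down. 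The paper instead uses the recurrence $W_{\phi_{(k)}}(z+1)=W_{\phi_{(k)}}(z+1+l)\big/\prod_{j=1}^{l}\phi_{(k)}(z+j)$ to shift to $a+l\in[0,1)$, applies Lemma~\ref{lem:boundW} at the shifted point, and then absorbs the extra product $\prod_{j=1}^{l}\phi_{(k)}(z+j)$ as a further $\phi'(k)^{l}$.

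Second, your final constant involves $|\Gamma(z+1)|$ as a standalone factor, and this is \emph{not} locally bounded on the closed strip $\{0\ge\Re(z)>-1+\mathbf a_\phi\ind{\phi(0)=0}\}$ once that strip contains negative integers: both $\Gamma(z+1)$ and $1/W_{\phi_{(k)}}(z+1)$ blow up there, and only their ratio stays finite. The paper deals with this by treating negative integer $a$ separately, computing $\lim_{w\to0}\Gamma(a+w+1)/W_{\phi_{(k)}}(a+w+1)=\phi'(k)\prod_{j=1}^{-a-1}\phi_{(k)}(a+j)/(a+j)$ via the recurrence, which makes the cancellation of poles explicit and yields the same $\phi'(k)^{-a}$ count. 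Without this (or the equivalent shift argument), your $K(z)$ is not locally bounded on the stated domain.
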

\begin{proof}
The proof when $a\geq 0$ is a combination of \eqref{eq:estimate} and inequality \eqref{eq:boundW_1}, which yields, for all $k$ large enough,
 \begin{equation*}
     \begin{split}
          \abs{H_\phi(z,k)}&\leq K\abs{\phi(k)-\phi(k+a)}\frac{\lbrb{\phi'(k)}^{a-a\epsilon}}{\lbrb{\phi(k)}^{a(1-\epsilon)+1}}\frac{\abs{\lbrb{\phi(z+1+k)-\phi(k)}\sqrt{|\phi\lbrb{z+2+k}-\phi(k)|}}}{\sqrt{\phi(k+1)-\phi(k)}}\\
          &\times e^{-\int_{1}^{2+a}\ln\lbrb{\phi(x+k)-\phi(k)}dx},
     \end{split}
 \end{equation*}
where $\Gamma(z+1)$ is included in $K$. Applying first the second inequality in \eqref{eq:kappa} to the terms in the modulus, and then \eqref{eq:derPhi} and $\phi(k+1)-\phi(k)\geq \phi'(k+1)$ (see \cite[Proposition 3.1, (2)]{BarkerSavov2021}), we arrive at
 \begin{equation*}
     \begin{split}
          \abs{H_\phi(z,k)}&\leq K'\frac{\lbrb{\phi'(k)}^{2+a-a\epsilon}}{\lbrb{\phi(k)}^{a(1-\epsilon)+1}} e^{-\int_{1}^{2+a}\ln\lbrb{\phi(x+k)-\phi(k)}dx},
     \end{split}
 \end{equation*}
for some $K'=K(z)>0$. Since $\phi(x+k)-\phi(k)\geq x\phi'(2+a+k)$ (see \cite[Proposition 3.1, (2)]{BarkerSavov2021}), we get
 \[\ln\lbrb{\phi(x+k)-\phi(k)}\geq \ln(x)+\ln\lbrb{\phi'(2+a+k)}.\]
Now, using \eqref{eq:derPhi} to compare $\phi'(2+a+k)$ with $\phi'(k)$, we derive, at a cost of a constant,
  \begin{equation*}
     \begin{split}
          \abs{H_\phi(z,k)}&\leq K''\frac{\lbrb{\phi'(k)}^{1-a\epsilon}}{\lbrb{\phi(k)}^{a(1-\epsilon)+1}},
     \end{split}
 \end{equation*}
for some $K''=K(z)>0$. Let $a\in(\mathbf{a}_\phi\ind{\phi(0)=0}-1,0)$ and $z$ be not a negative integer.  Choose $l\geq 1$ such that $a+l\in\lbbrb{0,1}$. Using the recurrence relation for $\Wp$, we get that
 \begin{equation}\label{eq:Hr}
     \begin{split}
\abs{H_\phi(z,k)}&=\abs{\lbrb{\phi(k)-\phi(z+k)}\frac{\prod_{j=1}^{k-1}\lbrb{\phi(k)-\phi(z+j)}}{\prod_{j=1}^{k-1}\lbrb{\phi(k)-\phi(j)}}\frac{1}{\phi(k)} \frac{\Gamma(z+1+l)}{W_{\phi_{(k)}}(z+1+l)}\frac{\prod_{j=1}^l\phi_{(k)}(z+j)}{\prod_{j=1}^l z+j}}.
     \end{split}
 \end{equation}
Using the estimates in \eqref{eq:kappa}, \eqref{eq:boundW_1},  \eqref{eq:estimate1}, and similar arguments as above, we get
 \begin{equation*}
     \begin{split}
         \abs{H_\phi(z,k)}&\leq K\lbrb{\phi'(k)}^{l+1}\lbrb{\frac{\phi'(k)}{\phi(k)}}^{a(1+\epsilon)}\frac{1}{\phi(k)}\frac{\abs{\lbrb{\phi(z+1+k)-\phi(k)}\sqrt{|\phi\lbrb{z+2+k}-\phi(k)|}}}{\sqrt{\phi(k+1)-\phi(k)}}\\
          &\times e^{-\int_{1}^{2+l+a}\ln\lbrb{\phi(x+k)-\phi(k)}dx}\\
          &\leq K' \frac{\lbrb{\phi'(k)}^{a(1+\epsilon)+l+2}}{\lbrb{\phi(k)}^{a+a\epsilon+1}}e^{-\int_{1}^{2+l+a}\ln\lbrb{\phi(x+k)-\phi(k)}dx},
     \end{split}
 \end{equation*}
where $K,K'$ are positive constants independent of $k$. Similar to the above, we estimate $\phi(x+k)-\phi(k)\geq x\phi'(2+l+a+k)$ and use \eqref{eq:derPhi} to obtain
 \begin{equation*}
     \abs{H_\phi(z,k)}\leq K'' \frac{\lbrb{\phi'(k)}^{1+a\epsilon}}{\lbrb{\phi(k)}^{1+a+a\epsilon}},
 \end{equation*}
with $K''>0$. Finally, let $z=a>\mathbf{a}_\phi\ind{\phi(0)=0}-1$ be a negative integer when $\mathbf{a}_\phi\ind{\phi(0)=0}<0$. For any $k\geq 1$, $\phi_{(k)}(0)=0$ and $\mathbf{a}_{\phi_{(k)}}=\mathbf{a}_\phi+k$, see \eqref{eq:aphi}. According to \cite[Theorem 2.1, (2.10)]{PatieSavov2018}, the function $\Gamma(z)\bigl/W_{\phi_{(k)}}(z)$ is analytic on $\{z\in\Cb:\Re(z)>\mathbf{a}_\phi+k\}$. Choose $z'=a+w$ and $w\in\Cb$ such that $\Re(z')>-1-\mathbf{a}_\phi\ind{\phi(0)=0}$. Using an argument as in \eqref{eq:Hr}, we have
 \[\frac{\Gamma(z'+1)}{W_{\phi_{(k)}}(z'+1)}=\frac{\Gamma(z'+1-a)}{W_{\phi_{(k)}}(z'+1-a)}\frac{\prod_{j=1}^{-a}\phi_{(k)}(z'+j)}{\prod_{j=1}^{-a} z'+j}=\frac{\Gamma(w+1)}{W_{\phi_{(k)}}(w+1)}\frac{\prod_{j=1}^{-a}\phi_{(k)}(a+w+j)}{\prod_{j=1}^{-a} a+w+j},\]
and thus, we get
\begin{equation*}\label{pr:poles}
 \limo{w}\frac{\Gamma(a+w+1)}{W_{\phi_{(k)}}(a+w+1)}=\limo{w}\frac{\Gamma(w+1)}{W_{\phi_{(k)}}(w+1)}\frac{\prod_{j=1}^{-a}\phi_{(k)}(a+w+j)}{\prod_{j=1}^{-a} a+w+j}=\phi'(k)\prod_{j=1}^{-a-1}\frac{\phi_{(k)}(a+j)}{a+j},
 \end{equation*}
where we have used that $\phi'_{(k)}(0)=\phi'(k)$. Therefore, in \eqref{eq:H}, we get
 \begin{equation*}
     \begin{split}
         \abs{H_\phi(a,k)}&=\abs{\lbrb{\phi(k)-\phi(a+k)}\frac{\prod_{j=1}^{k-1}\lbrb{\phi(k)-\phi(a+j)}}{\prod_{j=1}^{k-1}\lbrb{\phi(k)-\phi(k)}}\frac{\phi'(k)}{\phi(k)} \prod_{j=1}^{-a-1}\frac{\phi_{(k)}(a+j)}{ a+j}}.
     \end{split}
 \end{equation*}
 Finally, from Lemma \ref{lem:kappa}, we deduce
 \[\abs{\phi'(k)\prod_{j=1}^{-a-1}\frac{\phi_{(k)}(a+j)}{ a+j}}\leq \lbrb{\phi'(k)}^{-a},\]
 which has the same contribution to the bound as above. Thus, we conclude the proof of \eqref{eq:estimate1a}. The bounds \eqref{eq:estimate2} are derived in  the same way by invoking Lemma \ref{lem:estimate1}, which gives better bounds for
 \[\abs{\frac{\prod_{j=1}^{k-1}\lbrb{\phi(z+j)-\phi(k)}}{\prod_{j=1}^{k-1}\lbrb{\phi(j)-\phi(k)}}}.\]
 The rest is the same. Therefore, we conclude the proof.
\end{proof}

We now prove Theorem \ref{thm:NegMom}.

\begin{proof}[Proof of Theorem \ref{thm:NegMom}]
We will use the proof of Theorem 2.14 in \cite{BarkerSavov2021}, since it employs the general representation of the moments of $I_\phi(t)$ as an infinite convolution (see \cite[Theorem 2.12]{BarkerSavov2021}), yielding, for $\Re(z)>0$,
\begin{equation}\label{pr:NegMom_base}
\frac{d}{dt}\Ebb{I^{z}_{\phi}(t)}=\sum_{k\geq 1}\frac{\prod_{j=1}^k\lbrb{\phi(z+j)-\phi(k)}}{\prod_{j=1}^{k-1}\lbrb{\phi(j)-\phi(k)}}e^{-\phi(k)t}\frac{\Gamma(z+1)}{W_{\phi_{(k)}}(z+1)}=\sum_{k\geq 1}\phi(k)H_\phi(z,k)e^{-\phi(k)t},
\end{equation}
provided that the series is absolutely convergent and $\phi(\infty)=\infty$. Then \eqref{eq:NegMom} follows from
\[\Ebb{I^{z}_{\phi}(t)}=\Ebb{I^{z}_{\phi}(\infty)}-\int_{t}^{\infty }\frac{d}{ds}\Ebb{I^{z}_{\phi}(s)}ds,\]
the integration of the exponents in \eqref{pr:NegMom_base}, and the fact that the classical identity $\Eb[I^{z}_{\phi}(\infty)]=\Gamma(z+1)/W_{\phi}(z+1)$ holds. Fix $z=\ab$ with $a>0$. We proceed to show that the series in \eqref{pr:NegMom_base} is absolutely convergent. First, note that since $\overline{\lim}_{x\rightarrow\infty}\phi'(x)/\phi'(2x)<\infty$, it follows from Remark \ref{rem:estimate1} that $\lim_{x\rightarrow\infty}\phi''(x)/\phi'(x)=0$ and all estimates in Lemma \ref{lem:H} are valid. Second, with $\varphi=\phi^{-1}$, we have
\[\infty=\limi{x}x^2\phi'(x)=\limi{y}\frac{\varphi^2(y)}{\varphi'(y)}.\]
Third, recalling the definition of $y_k$ in Lemma \ref{lem:H} and since $\phi(\infty)=\infty$, we see that
\[\infty=\lim{k}\phi(k)=\limi{k}\lbrb{\phi(y_k)+y_k\phi'(y_k)}.\]
Henceforth, $\lim_{k\rightarrow\infty}y_k=\infty$ and $\lim_{k\rightarrow\infty}\varphi'(y_k)/\varphi^2(y_k)=0$. As a result, for any $a>-1-\mathbf{a}_\phi$, the second bound in \eqref{eq:estimate2} implies that, for any $\varepsilon>0$ and all $k\geq k_\varepsilon$,
\begin{equation}\label{pr:H}
\abs{\phi(k)H_\phi(z,k)}\leq  K\frac{\phi'(k)}{\lbrb{\phi(k)}^{a}}e^{C\lbrb{a+\frac{a^2}{2}}\phi(k)\lbrb{\frac{8c\ln{k}}{k^2\phi'(k)}+\varepsilon}}.
\end{equation}
Set $\mathcal{K}_\varepsilon:=\bigl\{k\geq k_\varepsilon: \frac{8\ln{k}}{k^2\phi'(k)}\leq \varepsilon\bigr\}$. Using the bound \eqref{eq:estimate} with $\epsilon=\varepsilon$ on $\mathcal{K}^c_\varepsilon$, we get the unified bound
\begin{equation}\label{pr:H1}
\abs{\phi(k)H_\phi(z,k)}\leq  K\frac{\phi'(k)}{\lbrb{\phi(k)}^{a}}e^{2c\varepsilon\lbrb{a+\frac{a^2}{2}}\phi(k)}\ind{k\in\mathcal{K}_\varepsilon}+K\frac{\lbrb{\phi'(k)}^{1-a\varepsilon}}{\lbrb{\phi(k)}^{1+a-a\varepsilon}}\ind{k\in\mathcal{K}^c_\varepsilon}.
\end{equation}
Choosing $\varepsilon=\varepsilon(\delta,a,c,C)>0$ by estimating $\phi'(k)$ on $\mathcal{K}^c_\varepsilon$, we show that, for any $\delta>0$ small enough,
\begin{equation}\label{pr:H2}
\abs{\phi(k)H_\phi(z,k)}\leq  K\frac{\phi'(k)}{\lbrb{\phi(k)}^{a}}e^{\delta\phi(k)}+K\lbrb{\frac{1}{\varepsilon}\frac{\ln{k}}{k^2}}^{1-\delta}\frac{1}{\lbrb{\phi(k)}^{1+a-a\varepsilon}}.
\end{equation}
From \eqref{pr:NegMom_base} we get that, for $t>\delta$,
\begin{equation*}
    \begin{split}
    \sum_{k\geq k_\varepsilon}\abs{\phi(k)H_\phi(z,k)}e^{-\phi(k)t}\leq K\sum_{k\geq k_\varepsilon}\lbrb{\frac{\phi'(k)}{\lbrb{\phi(k)}^{a}}e^{-(t-\delta)\phi(k)}+\frac{K}{\varepsilon}\lbrb{\frac{\ln{k}}{k^2}}^{1-\delta}\frac{1}{\lbrb{\phi(k)}^{1+a-a\varepsilon}}}.
    \end{split}
\end{equation*}
The second series is obviously convergent, while for the first one we use the integral comparison, noting that as $\phi'$ is decreasing and $\phi$ is increasing,
\[\sum_{k\geq k_\varepsilon}\frac{\phi'(k)}{\lbrb{\phi(k)}^{a}}e^{-(t-\delta)\phi(k)}\leq \int_{k_\varepsilon-1}^{\infty} \frac{\phi'(x)}{\phi^a(x)}e^{-(t-\delta)\phi(x)}dx=\int_{\phi(k_\varepsilon-1)}^\infty x^{-a}e^{-(t-\delta)x}dx<\infty.\]
Since $\delta$ is arbitrary, we establish the absolute continuity of the series in \eqref{pr:NegMom_base} and, by the aforementioned reference to the proof in \cite{BarkerSavov2021}, we verify \eqref{eq:NegMom} for all $t>0$ and $z=\ab$ such that $a>0$. We emphasize that the very same estimates as in \eqref{pr:H}, \eqref{pr:H1} and \eqref{pr:H2} are valid when $a\in(-1-\mathbf{a}_\phi,0]$, with $a$ replaced by $-a$ in the exponent of \eqref{pr:H}. Therefore, the series in \eqref{pr:NegMom_base} is absolutely convergent. Thus, by analytic continuation, we conclude the proof of this theorem.
\end{proof}

\begin{proof}[Proof of Corollary \ref{cor:NegMom}]
We sketch only the case $l=-1$; the rest is similar. We apply \eqref{eq:NegMom}, for $z=-1+\varepsilon$ with $\varepsilon>0$, to get
\begin{equation*}
\Ebb{I^{-1+\varepsilon}_{\phi}(t)}=\frac{\Gamma(\varepsilon)}{W_{\phi}(\varepsilon)}-\sum_{k\geq 1}\frac{\prod_{j=1}^k\lbrb{\phi(-1+\varepsilon+j)-\phi(k)}}{\prod_{j=1}^{k-1}\lbrb{\phi(j)-\phi(k)}}\frac{e^{-\phi(k)t}}{\phi(k)} \frac{\Gamma(\varepsilon)}{W_{\phi_{(k)}}(\varepsilon)}.
\end{equation*}
Since $\phi'(0)<\infty$ and $\phi'_{(k)}(0)=\phi'(k)$, $k\geq 1$, the recurrent relation \eqref{eq:recur} yields
\begin{equation*}
\Ebb{I^{-1+\varepsilon}_{\phi}(t)}=\frac{\Gamma(\varepsilon+1)}{W_{\phi}(\varepsilon+1)}\frac{\phi(\varepsilon)}{\varepsilon}-\sum_{k\geq 1}\frac{\prod_{j=1}^k\lbrb{\phi(-1+\varepsilon+j)-\phi(k)}}{\prod_{j=1}^{k-1}\lbrb{\phi(j)-\phi(k)}}\frac{e^{-\phi(k)t}}{\phi(k)} \frac{\Gamma(\varepsilon+1)}{W_{\phi_{(k)}}(\varepsilon+1)}\frac{\phi_{(k)}(\varepsilon)}{\varepsilon}.
\end{equation*}
Setting $\varepsilon\to 0$ and using $\phi(0)=0$ and $\Wp(1)=1$ for any $\phi$, we conclude the proof in this case.
\end{proof}

\section*{Acknowledgement}
Z. Palmowski acknowledges that the research is partially supported by Polish National Science Centre Grant No. 2018/29/B/ST1/00756.  H. Sariev and M. Savov acknowledge that this study is financed by the European Union - NextGenerationEU, through the National Recovery and Resilience Plan of the Republic of Bulgaria, project No. BG-RRP-2.004-0008.

 \bibliographystyle{plain}
	\bibliography{bibliography.bib}
\end{document}